\newsavebox{\abstractbox}
\renewenvironment{abstract}
{\begin{lrbox}{0}\begin{minipage}{\textwidth}
			\begin{center}\normalfont\sectfont\abstractname\end{center}\quotation}
		{\endquotation\end{minipage}\end{lrbox}%
	\global\setbox\abstractbox=\box0 }
\newtheorem{definition}{Definition}[section]
\newtheorem{theorem}{Theorem}[section]
\newenvironment{remark}{\textit{Remark:}}
\DeclareTextFontCommand{\textmyfont}{\myfont}
\newcolumntype{L}[1]{>{\raggedright\let\newline\\\arraybackslash\hspace{0pt}}p{#1}}
\newcolumntype{C}[1]{>{\centering\let\newline\\\arraybackslash\hspace{0pt}}p{#1}}
\newcolumntype{R}[1]{>{\raggedleft\let\newline\\\arraybackslash\hspace{0pt}}p{#1}}
\renewcommand{\emph}[1]{\textit{#1}}
\begin{document}
\emergencystretch 3em
\title{\large Optimizing intermodal transportation networks at scale via column generation}

\author[1]{\normalsize Benedikt Lienkamp}
\author[2]{\normalsize Maximilian Schiffer}
\affil{\small 
	TUM School of Management, Technical University of Munich, 80333 Munich, Germany
	
	\scriptsize benedikt.lienkamp@tum.de
	
	\small
	\textsuperscript{2}TUM School of Management \& Munich Data Science Institute,
	
	Technical University of Munich, 80333 Munich, Germany
	
	\scriptsize schiffer@tum.de}

\date{}

\lehead{\pagemark}
\rohead{\pagemark}

\begin{abstract}
\begin{singlespace}
{\small\noindent In light of the need for design and analysis of intermodal transportation systems, we propose an algorithmic framework to determine the system optimum of an intermodal transportation system. To this end, we model an intermodal transportation system by combining two core principles of network optimization -- layered-graph structures and (partially) time-expanded networks -- to formulate our problem on a graph that allows us to implicitly encode problem specific constraints related to intermodality. This enables us to solve a standard integer minimum-cost multi-commodity flow problem to obtain the system optimum for an intermodal transportation system. To solve this integer minimum-cost multi-commodity flow problem efficiently, we present a column generation approach to find continuous minimum-cost multi-commodity flow solutions, which we combine with a price-and-branch procedure to obtain integer solutions. To speed up our column generation, we further develop a pricing filter and an admissible distance approximation to utilize the $A^*$ algorithm for solving the pricing problems. We show the efficiency of our framework by applying it to a real-world case study for the city of Munich, where we solve instances with up to 56,295 passengers to optimality, and show that the computation time of our algorithm can be reduced by up to 60\% through the use of our pricing filter and by up to additional 90\% through the use of the $A^*$-based pricing algorithm.\\
\smallskip}
{\footnotesize\noindent \textbf{Keywords:} column generation; multi-commodity flow; intermodal transportation}
\end{singlespace}
\end{abstract}

\maketitle
\section{Introduction}
\label{sec:introduction}
Multi-commodity network flow (MCNF) problems have been vividly studied during the last decades and have been applied to various real-world domains. Among others, they have been extensively used to model communication and transportation systems. Here, exact algorithms have been developed to solve medium scale problems, e.g., for message routing \citep{barnhart2000using} and heuristic algorithms have been proposed to quickly obtain good quality solutions for large scale problems, e.g., for general linear minimum-cost MCNF problems \citep{salimifard2022multicommodity}. This state of the art has been used during the last years to study transportation systems as it suffices for medium scale problem sizes or mesoscopic system analyses.

However, recent developments in the transportation sector, e.g., the advent of mobility as a service, intermodal trips, autonomous vehicles, ride-hailing, and ride-pooling require in-depth analyses of the resulting complex transportation systems at scale. In this context, an optimization-based analysis of the transportation system’s optimum is of interest, as future intermodal transportation systems with autonomous elements allow for more control and are thus amenable for the integration of optimal routing decisions. Determining system optimal solutions in such a context requires to solve MCNF problems that significantly exceed problem sizes that have so far been solved to optimality. Additionally, the intermodal structure of future transportation systems needs to be considered.

Against this background, we revisit the state of the art on solving MCNF problems for transportation systems. We aim to develop a column generation-based algorithm to find system optima of intermodal transportation networks and improve upon the current state of the art in terms of computational tractability and scalability. In the remainder of this section, we first review related literature, before we define our contributions and the paper's organization.

\subsection{Related Literature}
MCNF models have been used to model a variety of problems, amongst others in communication (\citealp{ouorou2000survey}; \citealp{lin2001study}; \citealp{wagner2007multi}), logistics (\citealp{erera2005global}; \citealp{ALKHAYYAL2007106}; \citealp{psaraftis2011multi}), and transportation systems (\citealp{ayar2012intermodal}; \citealp{paraskevopoulos2016congested}; \citealp{zhang2019multi}). Besides these research fields, MCNF models also find application in financial flows \citep{yang2015bitcoin}, evacuation planning \citep{pyakurel2017continuous}, production services \citep{atamturk2007two}, and traffic control \citep{bertsimas2000traffic}, which shows the great versatility of MCNF models. Various heuristics, approximations, and exact solution methods have been used to solve different variants of MCNF problems.

The most prominent heuristic methods to solve MCNF problems are genetic algorithms (\citealp{khouja1998use}; \citealp{alavidoost2018bi}), simulated annealing (\citealp{yaghini2012simplex}; \citealp{moshref2016customer}), and tabu search (\citealp{ghamlouche2003cycle}; \citealp{crainic2006first}).
Various approximation methods exist for different types of network flow formulations. Lagrangian-based methods proved to be very efficient at solving MCNF problems and find their application in solving multi-commodity capacitated fixed charge network design problems \citep{crainic2001bundle} and minimum-cost MCNF problems \citep{retvdri2004novel}.

The two most prominent exact approaches to solve MCNF problems are branching and decomposition methods. Branching problems build on the idea of branch-and-bound (B\&B) \citep{lawler1966branch} and have been used to solve integer MCNF problems \citep[see, e.g.,][]{brunetta2000polyhedral}. Decomposition methods were developed by \cite{dantzig1960decomposition} and used, for example, by \cite{barnhart1994column} to solve message routing problems, formulating them as minimum-cost MCNF problems and solving them via column generation (CG). 
\cite{barnhart2000using} utilize the branch-and-price (B\&P) algorithm \citep{barnhart1998branch}, a combination of CG and B\&B, to solve integer MCNF problems. Here, the authors applied bounds provided by solving linear programs, using column-and-cut generation at nodes of the branch-and-bound tree in combination with cuts, generated at each node in the B\&B tree, to mitigate symmetry effects.

In general, MCNF models have recently been used to model transportation systems, e.g., autonomous Mobility-on-Demand (AMoD) systems in congested road networks \citep{rossi2018routing} and to study the interaction of AMoD with the public transportation system \citep{salazar2019intermodal} on a mesoscopic level. We refer to \cite{salimifard2022multicommodity} for an extensive overview of applications and solution methods for MCNF problems.

Dynamic flows add a time dimension to static network flow problems and allow flow values on arcs to change over time. In dynamic network flow models for traffic optimization, the size of the underlying graph can be a limiting factor for the model's granularity or time horizon. To address the problem of exponential growth in dynamic networks, \cite{Boland2017}~introduced the concept of partially time-expanded networks to service network design problems. In partially time-expanded networks, not all timesteps are included in the network graph, which allows for iterative refinement to obtain a reduced graph size. Besides a reduced size of the underlying graph, multiple methods exist to speed up computation time in large network flow models. For an extensive overview of dynamic network flow problems, we refer to \cite{skutella2009introduction}.

In summary, various optimization approaches exist to solve MCNF problems for different problem settings.
So far, using an MCNF model to optimize intermodal passenger transport has only been done on a mesoscopic level \citep{salazar2019intermodal}. Analysis of passenger transport in large transportation networks on a microscopic level is often done via simulation \citep{ziemke2019matsim}, which is not amenable for optimization. To the best of the authors' knowledge, there exists no optimization framework that is capable of determining the system optimum of an intermodal transportation system on a microscopic level.

\subsection{Contribution}
To close the research gap outlined above, we propose an algorithmic framework to determine the system optimum of an intermodal transportation system. Specifically our contribution is four-fold: First, we model an intermodal transportation system by combining two core principles of network optimization, layered-graph structures and (partially) time-expanded networks, which allows us to implicitly encode problem specific constraints related to intermodality. This enables us to solve a standard integer minimum-cost MCNF problem to determine the system optimum for an intermodal transportation system. Second, to solve the integer minimum-cost MCNF problem efficiently, we present a CG approach to find continuous minimum-cost MCNF solutions, which we combine with a price-and-branch (P\&B) procedure to obtain integer solutions. To speed up our CG approach, we develop a pricing filter and an admissible distance approximation to utilize the $A^*$ algorithm for solving the pricing problems. Third, we show the efficiency of our framework by applying it to a real-world case study for the city of Munich, where we solve instances with up to 56,295 passengers to optimality, and show that the computation time of our algorithm can be reduced by up to 60\% through the use of our pricing filter and by up to additional 90\% through the use of the $A^*$-based pricing algorithm. Fourth, we open this algorithmic framework as open source code for further research.

\subsection{Organization}
The remainder of this paper is structured as follows. We specify our problem setting in Section \ref{sct: problem setting} and develop our methodology in Section \ref{sec:methododlogy}. In Section \ref{sec: case study} we describe a case study for the public transportation system for the city of Munich. We use this case study in Section \ref{sec: results} to present numerical results that show the efficiency of our algorithmic framework. Section \ref{sec: conclusion} concludes this paper by summarizing its main findings.
\section{Problem Setting}
\label{sct: problem setting}
We study optimal passenger routing in a large intermodal capacitated transportation system with fixed vehicle routes, e.g., bus, subway, and tram lines. An instance of our problem consists of a schedule of fixed vehicle routes in a transportation system and a set of passengers. The vehicle schedules contain information about the stops of each route, arriving times at its stops, and the capacity of the operating vehicle. Each passenger is associated with a transportation request, such that information about a trip's origin and destination coordinates and the departure time is available. In this setting, we aim to find paths for all passengers, such that the sum over all passenger travel times is minimal and all vehicle capacity constraints are kept. We formalize the underlying planning problem as follows.

\textit{Notation:} We consider a set of passengers $\mathcal{P} = \{1, 2, ..., P\}$ and a set of vehicle route schedules $\mathcal{R} = \{1, 2, ..., R\}$ in the transportation system. Each passenger $p \in \mathcal{P}$ is associated with a request tuple $\zeta_p = (o_p, d_p, \delta_p)$ comprising an origin coordinate $o_p$, a destination coordinate $d_p$ and a departure time $\delta_p$, which is the timestep in which a passenger wants to begin its trip. Every vehicle route schedule $r \in \mathcal{R}$ contains information about the stops $\mathcal{S}_r$ of the route, the arriving times $T_r^{\alpha}(s)$ at stops $s\in \mathcal{S}_r$, and the capacity $\gamma_r$ of the vehicle operating the route. Implicitly, the arrival times indicate in which order a route's stops are visited. Furthermore, we define $\mathcal{S} = \bigcup_{r \in \mathcal{R}} S_r$ as the set of all stops of the transportation system and $T(s) = \bigcup_{r \in \mathcal{R}} T_r^{\alpha}(s)$ as the set of all timestep in which vehicles arrive at stop $s \in \mathcal{S}$.

\textit{Solution:} A solution to a problem instance is a set of paths $\boldsymbol{\psi} = (\phi_{1}, \phi_{2}, ..., \phi_{P})$ with one path for each passenger $p \in \mathcal{P}$. Every path $\phi_p$ is either a list of tuples $[(o_p, \delta_p), (s_p^1, t_p^1), (s_p^2, t_p^2) ...,(d_p,t_p^{n_p})]$, or an empty set $\emptyset$ if there does not exist a feasible path for passenger $p$. Here for $i~\in~[1,..., n_p -1]$, $s_p^i \in \mathcal{S}$ is a stop in the transportation system, $t_p^i \in T(s_p^i)$ is a timestep in which a vehicle arrives at stop $s_p^i$, and $t_p^{n_p} > 0$ is the timestep at which passenger $p$ arrives at its destination coordinate.

\textit{Objective:} Our objective is to minimize the sum of travel times for all passengers $p \in \mathcal{P}$
\begin{equation}
\min c(\boldsymbol{\psi}) = \min \sum_{p \in \mathcal{P}} c(\phi_{p})
\end{equation}

Here, $c(\phi_{p}) = t_p^{n_p} - \delta_p$ is the travel time of passenger $p \in \mathcal{P}$ when using path $\phi_{p}$. If no feasible path was found for pasenger $p$, i.e., $\phi_p = \emptyset$, we assign $c(\phi_{p}) = \rho$, with $\rho$ being a predefined penalty.

\textit{Constraints:} A solution $\boldsymbol{\psi}$ is feasible if the following constraints hold:
\begin{itemize}
\item[(i)] The capacity $\gamma_r$ of a vehicle driving route $r \in \mathcal{R}$ is greater than the number of passengers using the route at each point in time.
\item[(ii)] For each tuple pair $(s_p^i, t_p^i), (s_p^{i+1}, t_p^{i+1})$, $i \in [1,..., n_p - 1]$ in $\phi_{p}$ with $p\in \mathcal{P}$, there 
	\begin{itemize}
		\item[(a)] either exists a route $r \in \mathcal{R}$ where the vehicle departs from $s_p^i$ in timestep $t_p^i$ to 							arrive at $s_p^{i+1}$ in $t_p^{i+1}$ without any stopovers,
		\item[(b)] or the distance between $s_p^i$ and $s_p^{i+1}$ is smaller than a given maximum walking distance ($\Delta_\text{w}$) and $t_p^{i+1} - t_p^i$ is at minimum the time necessary to walk from $s_p^i$ to $s_p^{i+1}$ with given walking speed $\xi$,
		\item[(c)] or the passenger waits at the stop, such that $s_p^i = s_p^{i+1}$ and $t_p^i > t_p^{i+1}$.
	\end{itemize}
 
\item[(iii)] The distance between $o_p$ and $s_p^1$ is smaller than a maximum access distance $\Delta_\text{a}$.
\item[(iv)] The distance between $s_p^{n_p-1}$ and $d_p$ is smaller than a maximum egress distance $\Delta_\text{e}$.
\item[(v)] The time $t_p^1 - \delta_p$ until a vehicle arrives at the first stop $s_p^1$ when a passenger first enters the transportation system is smaller than a given maximum waiting time $\Upsilon$.
\item[(vi)] $t_p^{n_p} \leq \delta_p + t_{\text{max}}$, where $t_{\text{max}}$ is the maximum travel time a passenger is allowed to take for its trip.
\end{itemize}

\noindent Three comments on this problem setting are in order. First, we assume that information about stops and arrival times at the stops is available for all vehicle routes. This assumption holds for scheduled transportation modes, e.g., bus, subway, and tram, but does not apply to ride hailing services. Still, it is possible to include unscheduled transportation modes in our model by adding a fully connected graph layer. Second, we assume fixed arc capacities with fixed travel times instead of flow dependent travel times. This approximation is mostly consistent with travel time observations on real-world roads. Here, travel time tends to stay stable up to the roads capacity and increases precipitously afterwards \citep{li2011fundamental}. An extensive discussion on the validity of this modeling assumption can be found in \cite{varaiya2005we}, and \cite{ostrovsky2019carpooling}. Third, we assume a fixed maximum travel time $t_{\text{max}}$ for all passengers. In practice, our approach is not limited to a homogeneous $t_{\text{max}}$ but allows to chose $t_{\text{max}}$ passenger-dependent. From an algorithmic perspective, a passenger-dependent $t_{\text{max}}$ reduces the size of the problem`s multilayered digraph. Henceforth, we decided to chose a homogeneous and large $t_{\text{max}}$ in order to challenge our algorithmic framework.

\section{Methodology}
\label{sec:methododlogy}
In the following, we introduce an algorithmic framework to solve the planning problem introduced in Section~\ref{sct: problem setting}. Intuitively, we can obtain a solution $\boldsymbol{\psi}$, i.e., paths for all passengers such that the sum over all travel times is minimal, by solving an integer minimum-cost MCNF problem. In a general minimum-cost MCNF problem let $\kappa_{ij}$ be the capacity of an arc $a=(i,j)$ with $i,j \in \mathcal{V}$, $d_i^k$ be the vertex demand of vertex $i\in \mathcal{V}$ and commodity $k\in \mathcal{K}$, and $x_{ij}^k \in \{0,1\}$ be the decision variable indicating whether commodity $k \in \mathcal{K}$ uses arc $(i,j)\in \mathcal{A}$ ($x_{ij}^k = 1$) or not ($x_{ij}^k = 0$). The vertex demand for commodity $k$ at vertex $i \in \mathcal{V}$ is defined as 

\begin{equation*}
  d_{i}^k =
    \begin{cases}
      \text{$1$}& \text{if } i = u_k,\\
      \text{$-1$}& \text{if } i = v_k,\\
      \text{$0$}& \text{otherwise}
    \end{cases}       
\end{equation*}

\noindent Here, $\mathcal{K}$ is a set of commodities, $u_k$ the source, and $v_k$ the sink of commodity $k \in \mathcal{K}$.

\begin{equation*}
\tag{IP 1}
\begin{array}{lll@{}lll}
    \min_{x}  & \displaystyle\sum\limits_{k \in \mathcal{K}} \displaystyle\sum\limits_{(i,j) \in \mathcal{A}} c_{ij} \ x_{ij}^k &&& (1a)\\
    \text{s.t.}& \displaystyle\sum\limits_{j \in \mathcal{N}^+(i)} x_{ij}^k - \displaystyle\sum\limits_{j \in \mathcal{N}^-(i)} x_{ji}^k = d_{i}^k,  && i \in \mathcal{V}, k \in \mathcal{K}&(1b)\\
    & \displaystyle\sum\limits_{k \in \mathcal{K}} x_{ij}^k \leq \kappa_{ij}, &&(i,j) \in \mathcal{A}&(1c)\\
    & x_{ij}^k \in \{0,1\}, &&(i,j) \in \mathcal{A}, k \in \mathcal{K} &(1d)
\end{array}
\label{eqn: IP 1}
\end{equation*}

\noindent In \ref{eqn: IP 1}, the objective function minimizes the sum over the cost of all commodity flows, while Constraints (1b) ensure flow conservation with $\mathcal{N}^+(i)$/$\mathcal{N}^-(i)$ being the outgoing/ingoing neighbourhood of $i \in \mathcal{V}$. Note that an ingoing neighbor of $i$ is a vertex $j\in \mathcal{V}$ such that $(j,i)\in \mathcal{A}$; an outgoing neighbor of $i$ is a vertex $j$ such that $(i,j)\in \mathcal{A}$. The remaining constraints enforce the capacity constraints of all arcs (1c) and ensure integer commodity flows~(1d).

To utilize such a minimum-cost MCNF formulation for our problem setting, it is necessary to model the transportation system as a digraph, which includes origin and destination vertices for all passengers, considers the intermodality of our problem setting, and encodes the spatial and temporal route constraints.

We introduce our methodology in three steps. First, we introduce the graph formulation that combines two core principles of network optimization: layered-graph structures and (partially) time-expanded networks. This allows us to implicitly encode problem specific constraints related to the intermodality of our planning problem and enables us to solve a standard integer MCNF problem to determine a solution $\boldsymbol{\psi}$. Second, we present a CG procedure to solve continuous MCNF problems. Third, we elaborate on how to obtain integral solutions through the application of a P\&B approach.

\subsection{Graph formulation}
\label{subsec: graph formulation idea}
To solve the planning problem as introduced in Section \ref{sct: problem setting} with a standard minimum-cost MCNF formulation, we capture network specific constraints implicitly in the underlying graph structure, independently from the MCNF formulation. To do so, we model the transportation system as a multilayered digraph as schematically shown in Figure~\ref{fig:Network graph}. This digraph contains a route layer for each route in the transportation system, which allows passengers to use a vehicle operating a route. Additionally, waiting layers for each stop in the transportation system allow passengers to wait at a stop for a new vehicle to arrive. Transit arcs enable passengers to switch between route and waiting layers and thus to enter or leave a vehicle at a stop. Walking arcs between different stop layers allow passengers to walk between stops in a certain proximity. 

\begin{figure}[!b]
	\fontsize{9}{10}\selectfont
    \centering
    \def\svgwidth{0.9\columnwidth}
    \input{./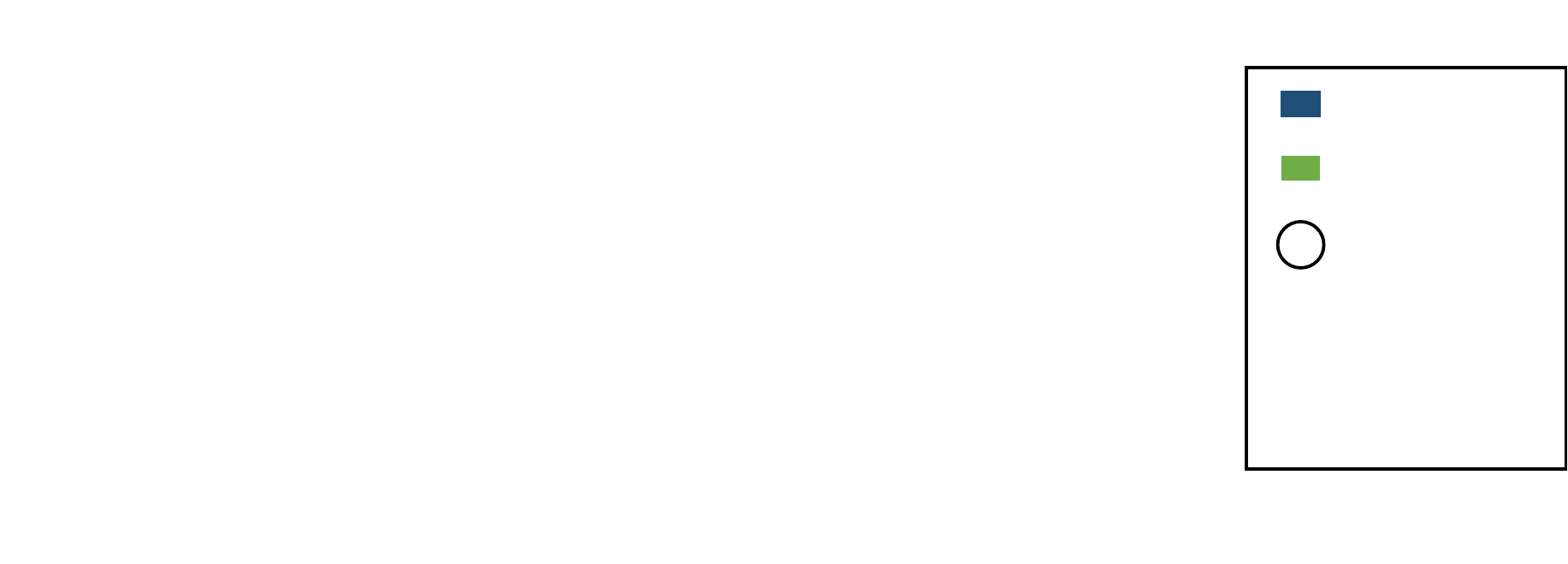_tex}
        \caption{Scematic illustration of the multilayered network structure}
    \label{fig:Network graph}
\end{figure}

\begin{figure}[!t]
\centering
\begin{subfigure}{.5\textwidth}
  \centering
  \includegraphics[width=.4\linewidth]{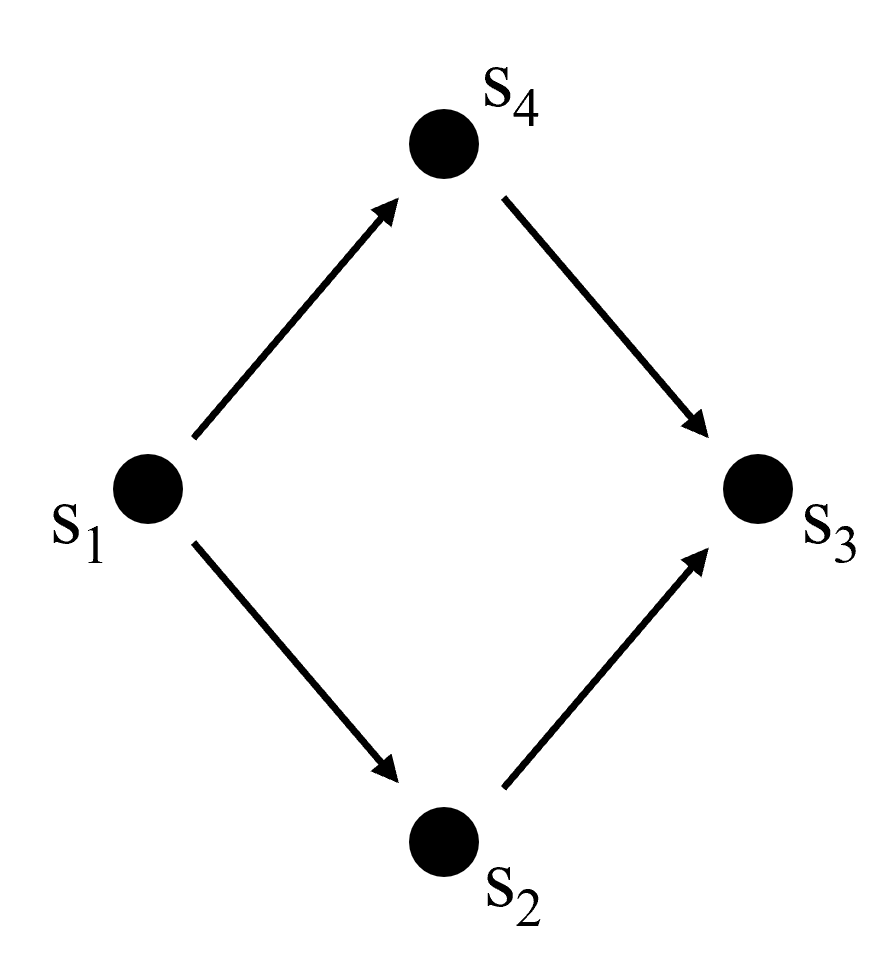}
  \caption{A static graph}
  \label{fig:static graph}
\end{subfigure}%
\begin{subfigure}{.5\textwidth}
  \centering
  \includegraphics[width=.6\linewidth]{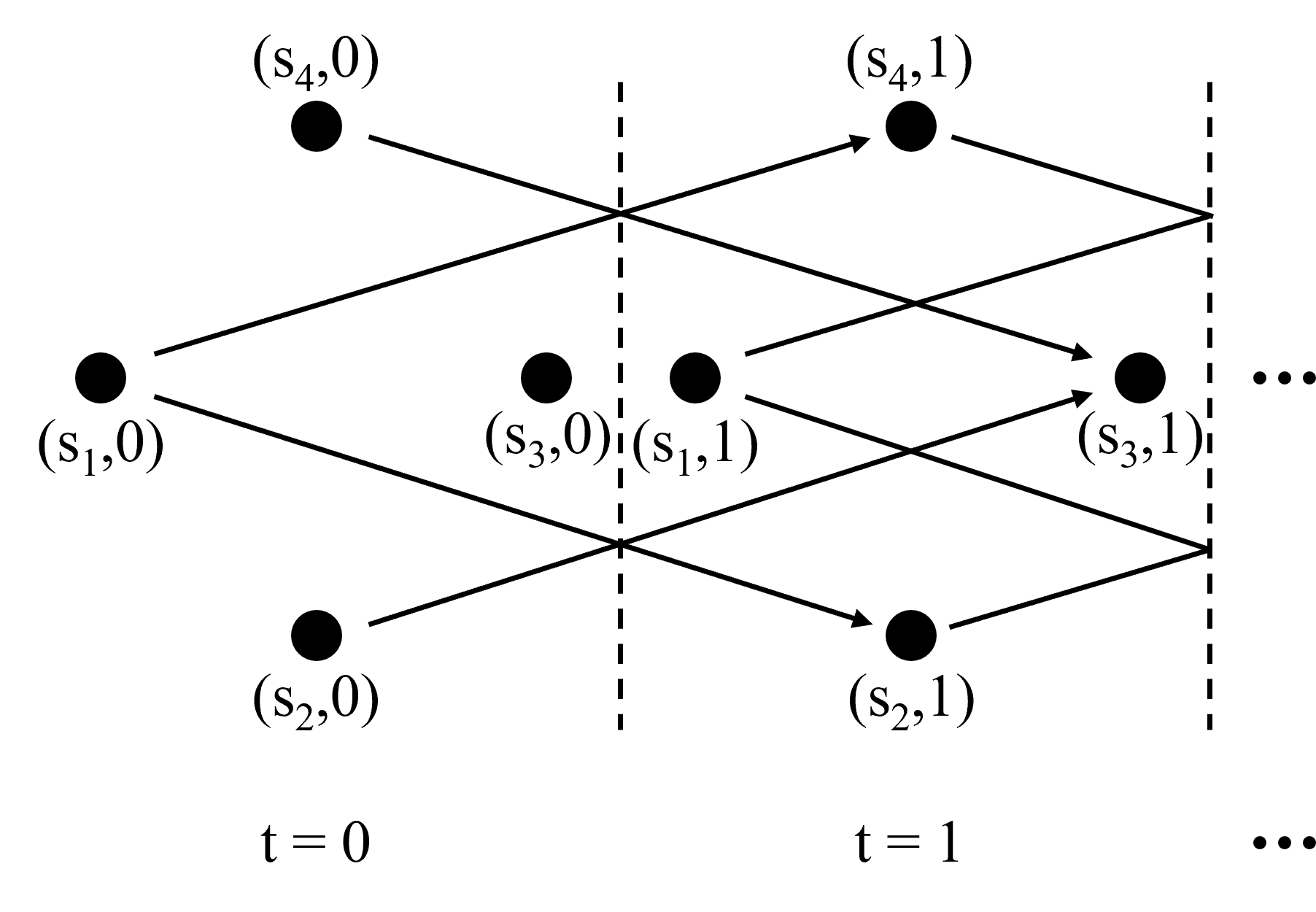}
  \caption{A time-expanded graph}
  \label{fig:time-expanded graph}
\end{subfigure}
\caption{Illustration of a static and a time-expanded graph with travel time one for all arcs}
\label{fig:static and time-expanded graph}
\end{figure}

Such a multilayered transportation network can be modeled as a static digraph $\mathcal{G}_{\sigma} = (\mathcal{V}_{\sigma}, \mathcal{A}_{\sigma})$ with a set of vertices $\mathcal{V}_{\sigma}$ and a set of arcs $\mathcal{A}_{\sigma}$. Here, the vertex set $\mathcal{V}_{\sigma}$ represents the stops $\mathcal{S}$ of the transportation system and an arc $a=(s_1,s_2) \in \mathcal{A}_{\sigma}$ indicates that a route $r \in \mathcal{R}$ exists, covering two consecutive stops $s_1$ and $s_2$. The drawback of such a static graph formulation is the missing time dimension such that the static graph formulation does not reflect the vehicle schedules.
Time-expanded graphs as introduced by \cite{ford1958constructing} allow for the integration of the time dimension (see Figure~\ref{fig:static and time-expanded graph}). A time-expanded graph $\mathcal{G}_{\Theta} = (\mathcal{V}_{\Theta},\mathcal{A}_{\Theta})$ contains one copy of the vertex set of an underlying static graph $\mathcal{V}_{\sigma}$ for each timestep. Additionally, arcs connect vertices in different time steps to account for the transit time.

While a time-expanded graph allows to implicitly model time, its size can grow rapidly even for relatively small underlying static graphs. To mitigate this drawback, we extend our layered graph with a partial time expansion \citep{Boland2017}. Here, the general idea is to copy only vertices at which a vehicle arrives at timestep $t$ and to only add a temporal arc $a = ((s_1, t_1)(s_2,t_2))$ to the time-expanded graph if a route covers stop $s_1$ in timestep $t_1$ and subsequently stop $s_2$ in timestep $t_2$. Such a graph formulation keeps all relevant vertices and arcs in its temporal expansion but the cardinality of the respective vertex and arc set is significantly smaller than for a fully time-expanded graph.

Accordingly, we represent an intermodal transportation system as a partially time-expanded multilayered digraph $\mathcal{G = (V, A)}$ with a set of temporal vertices $\mathcal{V}$ and a set of temporal arcs $\mathcal{A \subseteq V \times V}$. Let $\mathcal{S}$ be the set of all stops of the transportation network and let $\mathcal{T} = \{\bigcup_{p \in \mathcal{P}} \delta_p\} \cup \{\bigcup_{s \in \mathcal{S}} T(s)\} \cup \{\bigcup_{p \in \mathcal{P}} \delta_p + t_{\text{max}}\}$ be the set of all relevant timesteps. This set consists of all passenger departure times $\delta_p$, all timesteps during which any vehicle arrives at any station $T(s)$, and the latest timestep in which each passenger is allowed to arrive at its destination $\delta_p + t_{\text{max}}$. Then, the graph $\mathcal{G}$ contains a route layer $G_r = (V_r, A_r)$ for each vehicle route $r\in \mathcal{R}$ and a waiting layer $G_s = (V_s, A_s)$ for each stop $s\in \mathcal{S}$. In the remainder of this subsection, we detail the construction of $\mathcal{G}$. 

\subsubsection{Construction of route and waiting layers}
The route layers $G_\mathcal{R} = \bigcup_{r \in \mathcal{R}}G_r$ contain temporal route vertices $V_\mathcal{R} = \bigcup_{r \in \mathcal{R}}V_r$ and arcs $A_\mathcal{R} = \bigcup_{r \in \mathcal{R}}A_r$. 
A temporal route vertex $i = (s, t, r) \in V_r$ of a route layer $G_r$ with $r \in \mathcal{R}$ represents the vehicle of route $r$ arriving at stop $s\in \mathcal{S}$ at timestep $t\in \mathcal{T}$. Consecutive stops of route $r$ represented by temporal nodes $i,j \in V_r$ with $i = (s_i, t_i, r)$ and $j = (s_j, t_j, r)$ are connected via a temporal arc $a = (i,j) \in A_r$. Each arc in $A_r$ has a cost $c_{ij} = t_j - t_i$ and a capacity $\gamma_r$ which is equal to the capacity of the vehicle operating route $r$. 

The waiting layers $G_\mathcal{S} = \bigcup_{s \in \mathcal{S}}G_s$ contain temporal waiting vertices $V_\mathcal{S} = \bigcup_{s \in \mathcal{S}}V_s$ and arcs $A_\mathcal{S} = \bigcup_{s \in \mathcal{S}}A_s$ which allows passengers to wait at a stop. 
A temporal waiting vertex $i = (s, t) \in V_s$ is a tuple comprising a stop $s \in \mathcal{S}$ and a timestep $t \in \mathcal{T}$. For each timestep $t \in \mathcal{T}$ in which a vehicle of route $r\in \mathcal{R}$ arrives at stop $s\in \mathcal{S}$, there exists a temporal vertex $i = (s,t) \in V_s$ in the waiting layer of stop $s$. Consecutive temporal vertices are connected by waiting arcs $a \in A_s$. Waiting arcs have unlimited capacity, as we assume sufficient space at the stops for all passengers. The cost of waiting arcs results analogously to route arcs. 

\subsubsection{Construction of transit and walking arcs}
So far all route layers and waiting layers are disconnected. Transit arcs $A_T$ connect these different route and waiting layers and can either point from a route layer $G_r$, $r \in \mathcal{R}$ to a waiting layer $G_s$, $s \in \mathcal{S}$ or vice versa. Such a transit arc $a = (i, j) \in A_T$ from route node $i = (s, t, r) \in V_r$ to waiting node $j = (s, t) \in V_s$ has infinite capacity and cost $c_{ij} = 0$. For every such transit arc $a = (i,j)$ pointing from a route layer to a waiting layer, there exists a corresponding transit arc $a' = (j,i)$ from the waiting layer to the route layer with identical capacity and cost. 
Walking arcs $A_W$ connect waiting nodes of stops within a predefined walking distance $\Delta_\text{w}$. Here, waiting nodes $i = (s, t_i)$ and $j = (s', t_{j})$ of different stops $s, s' \in \mathcal{S}$ with $dist(s,s') \leq \Delta_\text{w}$ are connected with a walking arc $a = (i,j)$ with cost $c_{ij} = t_j - t_i$ and unconstrained capacity, if $t_i + \frac{dist(s,s')}{\xi} \leq t_j$. Here, $dist(s, s')$ denotes a distance measure between the stop coordinates of two stops $s,s' \in \mathcal{S}$.\\
The resulting graph $\mathcal{G = (V, A)}$ with $\mathcal{V} = V_\mathcal{R} \cup V_\mathcal{S}$ and $\mathcal{A} = A_\mathcal{R} \cup A_\mathcal{S}\cup A_T\cup A_W$ represents the temporal and spatial expansion of the transportation network.

\subsubsection{Construction of access and egress arcs}
To formulate the underlying optimization problem as an MCNF problem, we add the origin vertex $\omega_p^o = (o_p, \delta_p)$ and the destination vertex $\omega_p^d = (d_p, \delta_p + t_{\text{max}})$ of each passenger $p \in \mathcal{P}$ to $\mathcal{G}$. We can now connect $\omega_p^o$ to all stops $s \in \mathcal{S}$ with $dist(o_p, s') \leq \Delta_\text{a}$, for which a waiting node $v = (s, t)$ with $\delta_p + \frac{dist(o_p, s)}{\xi} \leq t \leq \delta_p + \Upsilon$ exists. These access arcs $a = (\omega_p^o, v)$ have cost $c_{\omega_p^o,v} = t - \delta_p$. Egress arcs connecting the transportation network with the destination vertex $\omega_p^d$ originate from all waiting nodes $v = (s, t) \in V_s$ of a stop $s\in \mathcal{S}$ with $dist(s, d_p) \leq \Delta_\text{e}$ for which it holds that $\delta_p \leq t + \frac{dist(s, d_p)}{\xi} \leq \delta_p + t_{\text{max}}$. These egress arcs have cost $c_{v,\omega_p^d} = \frac{dist(s, d_p)}{\xi}$. 

Thus, we expand the digraph $\mathcal{G = (V, A)}$ by the set of origin vertices $V_O$ and the set of destination vertices $V_D$ as well as the access arcset $A_{O}$ and the egress arcset $A_{D}$ resulting in $\mathcal{V} =  V_\mathcal{R} \cup V_\mathcal{S} \cup V_O \cup V_D$ and $\mathcal{A} = A_\mathcal{R} \cup A_\mathcal{S}\cup A_T\cup A_W \cup A_{O} \cup A_{D}$. Each passenger travel demand is now represented as an individual flow going from an origin vertex to a destination vertex with flow demand one. This allows us to model our problem as a minimum-cost MCNF problem as introduced in \ref{eqn: IP 1}, where $\mathcal{K} = \mathcal{P}$ and the origin and destination vertices correspond to sources and sinks for each commodity flow.

\subsubsection{Illustrative example}
\noindent \label{subsec: example}
\noindent The following example illustrates the resulting graph $\mathcal{G}$ for a transportation network. Table \ref{tab: vehicle routes} depicts the timetables for three vehicle routes $\mathcal{R} = \{r_1,r_2,r_3\}$, which operate on a set of stops $\mathcal{S} = \{s_1, s_2, s_3\}$. All vehicle routes $r \in \mathcal{R}$ have capacity $\gamma_r = 1$. We only consider one passenger with request tuple $\zeta_p = (o_p, d_p, 0)$. Table \ref{tab: walking distance} depicts the access and egress distances, as well as the walking distances between the different stops. 

\begin{table}[bp]
\small
\caption{Network characteristics}
\centering
\begin{subtable}{.5\linewidth}
\centering
\caption{Arrival times for vehicle routes}
\begin{tabular}{cccc}
& \multicolumn{1}{l}{\textbf{$r_1$}} & \multicolumn{1}{l}{\textbf{$r_2$}} & \multicolumn{1}{l}{\textbf{$r_3$}} \\ \hline
$s_1$ & 5 & - & 1 \\
$s_2$ & 6 & 2 & 2 \\
$s_3$ & - & 3 & 5 \\
\end{tabular}
\label{tab: vehicle routes}
\end{subtable}%
\begin{subtable}{0.5\linewidth}
	\small
	\centering
	\caption{Distances in the transportation network}
	\label{tab: walking distance}
	\begin{tabular}{cccccc}
		\multicolumn{1}{l}{} & \multicolumn{1}{l}{$s_1$} & \multicolumn{1}{l}{$s_2$} & \multicolumn{1}{l}{$s_3$} & \multicolumn{1}{l}{$o_1$} & \multicolumn{1}{l}{$o_2$}\\ \hline
			$s_1$                           & 0          & 6 		& 1		& 3		& 10 \\
			$s_2$                           & 6          & 0  	& 4 		& 5		& 1	\\
			$s_3$                           & 1          & 4  	& 0 		& 2		& 7 \\	
	\end{tabular}
	\end{subtable}%
\end{table}

We assume a walking speed $\xi = 1$ for access, egress and walking distances. Furthermore, we set the maximum access distance $\Delta_\text{a} = 3$, the maximum egress distance $\Delta_\text{e} = 7$, and the maximum walking distance $\Delta_\text{w} = 4$. Additionally, we set the maximum waiting time $\Upsilon = 4$ and the maximum travel time $t_{\text{max}}=10$. Figure~\ref{fig: example} shows the multilayered partially time-expanded graph for this example.

\begin{figure}[!b]
    \centering
    \def\svgwidth{\columnwidth}
    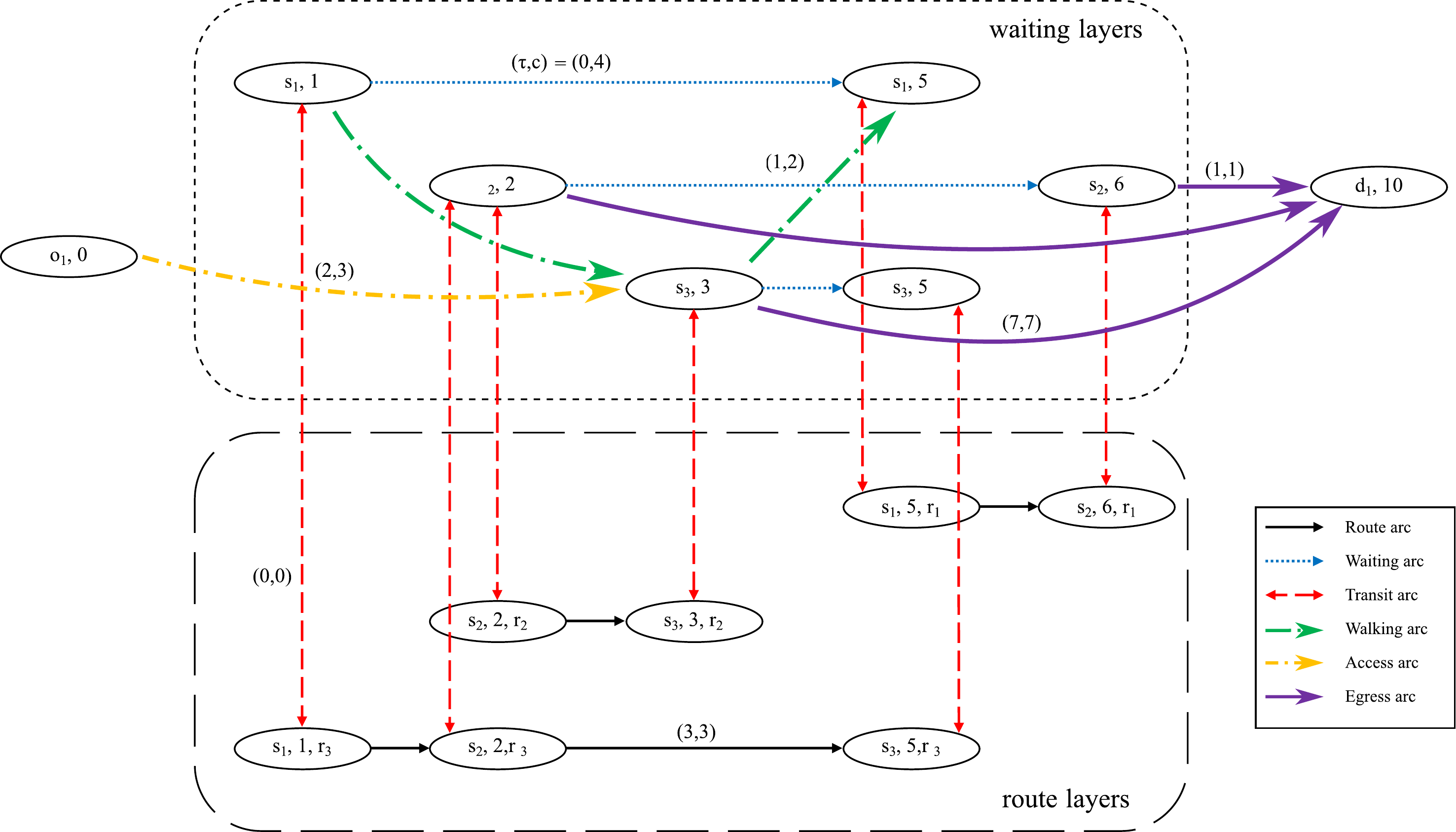
        \caption{Partially time-expanded multilayered digraph of the example with travel time $\tau$ and arc \\ cost $c$}
    \label{fig: example}
\end{figure}

\noindent For each route arc the travel time $\tau$ and the arc cost $c$ coincide, see, e.g., arc (($s_2,2,r_3$),($s_3,5,r_3$)). Nodes from the route layers are connected to nodes from the waiting layers via transit arcs. These transit arcs have a travel time and cost of zero, see, e.g., arc (($s_1,1,r_3$),($s_1,1$)). Nodes in the same waiting layer are connected via waiting arcs, all having a travel time of zero, but an arc cost equivalent to the waiting time, see, e.g., arc (($s_1,1$),($s_1,5$)). The origin vertex ($o_1,0$) is only connected to node ($s_3,3$) with travel time two and arc cost three. Note that even though $s_1$ has an access distance of three, there is no access arc to this waiting layer because the first possible departure after the passenger arrives at $s_1$ is at timestep five which exceeds $\Upsilon = 4$. There are two walking arcs between the stop layer of $s_1$ and $s_3$. Both walking arcs have a transit time of one and an arc cost of two. Note that even though the walking distance between $s_2$ and $s_3$ is smaller than $\Delta_\text{w}$, there exist no walking arcs between these two waiting layers because the travel time is too long, e.g., a passenger which starts at ($s_2,2$) can earliest arrive at $s_3$ at timestep six but the latest possible timestep at which to arrive at the waiting layer of $s_3$ is five. The destination vertex $(d_p, 10)$ has three ingoing egress arcs. For each egress arc the travel time and arc cost coincide. There exists no egress arc from ($s_3,5$) because a passenger cannot arrive at the destination before timestep 12, which is greater than $\delta_p + t_{\text{max}} = 10$. Thus, the two feasible paths for the passenger
are
\begin{align*}
&\eta_1 = ((o_1,0),(s_3,3),(d_1,10))\\
&\eta_2 = ((o_1,0),(s_3,3),(s_1,5),(s_1,5,r_1),(s_2,6,r_1),(s_2,6),(d_1,10))
\end{align*}

\noindent Here, the cost of $\eta_1$ is ten and the cost of $\eta_2$ is seven. Thus, $\eta_2$ can be used to construct path $\phi_1 = [(o_1,0),(s_3,3),(s_1,5),(s_2,6),(d_1,7)]$ for our problem setting. Note that the elements of $\phi_1$ do not correspond to nodes of Figure~\ref{fig: example}.

\subsection{Column generation}
\label{subsection: column generation}
For large sets of passengers, Constraints (1b) in \ref{eqn: IP 1} grow exponentially, which makes this standard model formulation of minimum-cost MCNF problems non-tractable for large instances. To mitigate this issue, we reformulate \ref{eqn: IP 1} as a set-covering problem to solve it via CG, which can be applied to linear programs (LPs) with a large set of decision variables. Here, the idea is to solve the LP with only a subset of all variables and iteratively add new variables to the program which have the potential to improve the objective function. New promising variables can be found by solving so called pricing problems. If one can show that adding new variables cannot improve the objective value, the algorithm terminates.

\begin{equation}
\tag{IP~2}
    \begin{array}{llll@{}llll}
        \min_{\bm{\lambda}} &\bm{c}^T\bm{Y\lambda} & (2a)\\
        \text{s.t.} &\bm{Y\lambda} \leq \bm{\kappa} & (2b) \\
        & \bm{\Lambda\lambda} = \bm{1}^{\vert \mathcal{P}\vert} &(2c)\\
        & \bm{\lambda} \in \{0,1\}^{n} & (2d)
    \end{array}
    \label{eqn: IP 2}
\end{equation}

\noindent \ref{eqn: IP 2} denotes the set covering reformulation of \ref{eqn: IP 1}, in which all possible paths for flows are represented as columns in the matrix $\bm{Y} \in \mathbb{R}^{\vert \mathcal{A} \vert \times n}$, where $n$ is the number of all possible paths and $\bm{Y_v}$ is the \textit{v}\textsuperscript{th} column of matrix $\bm{Y}$. The cost vector $\bm{c} \in \bm{\mathbb{R}}^{|\mathcal{A}|}$ is a vector containing the cost of all arcs $a \in \mathcal{A}$. The vector $\bm{\kappa} \in \bm{\mathbb{R}}^{\vert \mathcal{A} \vert}$ is the capacity vector of all arcs $a\in \mathcal{A}$ and the matrix $\bm{\Lambda}\in \bm{\mathbb{R}}^{\vert \mathcal{P}\vert \times n}$ is the incidence matrix between the paths of $\bm{Y}$ and the flows of passengers $p\in \mathcal{P}$. Accordingly, $\bm{\Lambda}_{pv} = 1$, if path $\bm{Y_v}$ corresponds to the flow of passenger $p$ and $\bm{\Lambda}_{pv}= 0$ otherwise. The decision variable vector $\bm{\lambda} \in \{0,1\}^n$ selects exactly one feasible path for each flow.
\vspace{\baselineskip}

\noindent To apply CG, we relax the integer constraint (2d) to a non-negativity constraint and solve the continuous relaxation of \ref{eqn: IP 2}. Note that the solution of the continuous relaxation is a lower bound of the integer solution and thus the system optimum. We add dummy variables with objective costs greater than $t_{\text{max}}$ for each passenger. This allows us to initiate the CG algorithm with an empty subset of all decision variables, i.e., without knowing any columns of matrix $Y$. Then, the resulting restricted master problem ($RMP$) based on \ref{eqn: IP 2} is as follows

\begin{equation}
\tag{RMP}
    \begin{array}{llll@{}llll}
        \min_{\bm{\lambda}} &\bm{c}^T\bm{Y\lambda} + \bm{c_{\text{init}}}^T\bm{\lambda}& (3a)\\
        \text{s.t.} &\bm{Y\lambda} \leq \bm{\kappa} & (3b) \\
        & \bm{\Lambda\lambda} = \bm{1}^{\vert \mathcal{P}\vert} &(3c)\\
        & 0 \leq \bm{\lambda}[i], i \in [1,...,|\mathcal{P}|]& (3d)
    \end{array}
    \label{eqn: RMP}
\end{equation} 

\noindent Here, $\bm{\Lambda} = \mathbb{I}_{|\mathcal{P}| \times |\mathcal{P}|}$, where matrix $\mathbb{I}_{|\mathcal{P}|\times |\mathcal{P}|}$ is the identity matrix of dimension $|\mathcal{P}|$. Additionally, matrix $\bm{Y} = \bm{0}_{|\mathcal{P}| \times |\mathcal{P}|}$, where $\bm{0}_{|\mathcal{P}| \times |\mathcal{P}|}$ is the matrix of all zeros. Furthermore, the expression $\bm{c_{\text{init}}}^T \bm{\lambda}$ is added to the objective function of \ref{eqn: IP 2}. Here, $\bm{c_{\text{init}}}[i] = \rho$ for $i \in [1,...,|\mathcal{P}|]$ is the penalty which occurs if a passenger is not able to reach its destination within $t_{\text{max}}$ timesteps. We then iteratively add new columns to the linear program by solving \ref{eqn: LP 4}, i.e., a pricing problem, for each passenger $p \in \mathcal{P}$.

\begin{equation*}
\tag{LP 4}
\begin{array}{llll@{}llll}
    \min_{\bm{X^p}} &(\bm{c} - \bm{w^*})^T \bm{X^p} - \alpha_p^* & (4a)\\
    \text{s.t.} &\bm{X^p} \leq \bm{\kappa} & (4b)\\
    & \bm{BX^p} = \bm{d^p} & (4c)\\
    & \bm{X^p} \geq \bm{0} & (4d)
\end{array}
\label{eqn: LP 4}
\end{equation*}

\noindent Here, $\mathbf{c}$ is the cost vector representing the costs of the arcs in $\mathcal{G = (V,A)}$. The vector $\bm{w^*}$ is the vector of the dual variables for the capacity constraints (3b), $\bm{\alpha^*}$ is the vector of the dual variables for the convexity constraint (3c), and $\bm{X^p}$ is the new path we want to find for the flow of passenger $p \in \mathcal{P}$. The matrix $\bm{B} \in \mathbb{R}^{\vert \mathcal{V} \vert \times \vert \mathcal{A} \vert}$ is the vertex-edge incidence matrix of graph $\mathcal{G}$. If the objective value for the pricing problem of passenger $p$ is negative, we add $\bm{X^p}$ as a new column to $\bm{Y}$ in \ref{eqn: RMP} and expand $\bm{\lambda}$ to account for the new found path. Notice that the constraint $\bm{\lambda}[i] \leq 1, i \in [1,...,|\mathcal{P}|]$ is redundant in the \ref{eqn: RMP} because of constraint (3c).

 \begin{algorithm*}[!t]
 \footnotesize
 \caption{Column generation pseudocode}\label{alg:cg}
 \begin{algorithmic}[1]
 \State $RMP \gets$ build \ref{eqn: RMP} based on \ref{eqn: IP 2}
 \State LB $\gets$ 0
 \While{$TRUE$}
     \State $sol \gets$ solve($RMP$)
     \If{$RMP$ infeasible}
         \State RETURN \textit{Error: RMP infeasible}
     \EndIf
    
    \If{Dual variables of $RMP$ did not change or $LB = sol.objVal$}
    		\State RETURN sol
    	\EndIf

     \State $\Pi \gets$ $\mathcal{P}$
    
	 \State $\beta$ = $RMP.objVal$
     \For{$p \in \Pi$}
         \State $pp \gets solvePricingProblem(p)$
         \If{$pp.objVal$ $<$ 0}
             \State addNewColumn($RMP$)
             \State $\beta$ += pp.ObjVal
         \EndIf
         \EndFor
     \If{$\beta$ $>$ LB}
     	\State LB $\gets$ $\beta$
     \EndIf
 \EndWhile
 \end{algorithmic}
 \label{ALG: 1}
 \end{algorithm*}

\noindent Algorithm \ref{ALG: 1} shows the pseudocode of our CG approach. We start by initializing the \ref{eqn: RMP}~(l.~1) and setting the lower bound~(LB) of the continuously relaxed master problem to zero~(l.~2). In every iteration, we solve the current $RMP$~(l.~4) and ensure its feasibility~(l.~5). Afterwards two stopping criteria are checked. The first stopping criterion checks if the dual variables of the \ref{eqn: RMP} did not change after it was solved~(l.~8). If the dual variables of the $RMP$ did not change, no new columns can be found. The second stopping criterion checks if the lower bound $LB$ is equal to the current objective function $sol.objVal$ of the $RMP$. If one of the two stopping criteria is fulfilled, we found an optimal solution for the continuous master problem and return it~(l.~9). If the algorithm did not terminate in the current iteration, we define a pricing pool~($\Pi$) which in its basic variant is equal to the set of all passengers $\mathcal{P}$~(l.~11). After that, we set a new variable $\beta$ to the current objective value of the $RMP$ which is used to try to find a new lower bound for the continuously relaxed master problem~(l.~12). Afterwards, the pricing problems of all passengers in $\Pi$ are solved~(l.~14). If the objective value of the pricing problem is negative, a new column is added to the $RMP$~(l.~16). Here, we add a new column to the $RMP$ by:
 \begin{itemize}
 	\item[(i)] Adding $\bm{X^p}$ as a new column to $\bm{Y}$.
 	\item[(ii)] Appending 0 to the cost vector $\bm{c_{\text{init}}}$. Note that the cost vector $\bm{c}$ does not change.
 	\item[(iii)] Appending the column vector $\bm{l} \in \mathbb{R}^{|\mathcal{P}|}$ to $\bm{\Lambda}$ where $\bm{l}[p] = 1$ and $\bm{l}[i] = 0$ for $i \in [1,...,|\mathcal{P}|]\backslash p$. 
 	\item[(iv)] Incrementing the dimension of $\bm{\lambda}$ by one.
 \end{itemize}
After all new columns are added, we update the $RMP$'s lower bound (l. 20-22). After the algorithm solved all pricing problems and updated the lower bound, we start a new iteration of the CG (l. 4).

While this CG implementation is straightforward, its computing time can be improved by utilizing a pricing filter to reduce the size of $\Pi$ (l.~11), which leads to fewer pricing problems that need to be solved (l.~13-19), and by using an $A^*$-based pricing algorithm instead of a straightforward Dijkstra-based pricing algorithm \citep{dijkstra1959note} to solve the pricing problems. In the remainder of this section, we elaborate on both of these improvement levers.

\subsubsection{Pricing Filter}
Solving the pricing problem $|\Pi|$ times in every iteration of the CG is a key bottleneck of Algorithm \ref{ALG: 1}. To mitigate this bottleneck, we reduce the pricing pool size $|\Pi|$ by adding a pricing filter. To derive such a pricing filter, we analyse the dual variables of the capacity constraints (3b). Intuitively, a negative dual capacity variable $\bm{w^*}_{ij}$ indicates that more passengers try to use arc $a = (i,j)$ than the arc capacity $\bm{\kappa}_{ij}$ allows. Accordingly, we add all passengers with a possible path which uses an arc with a negative dual capacity variable to the pricing pool. Note that in this case, the decision variable of the determining path does not have to be in the basis of the LP solution.

\begin{algorithm*}[!b]
 \footnotesize
 \caption{Pricing filter algorithm}
 \begin{algorithmic}[1]
 \State $\Pi \gets$ $\emptyset$
 
 \State $\Phi$ $\gets$ $\emptyset$
 \For{$(i,j) \in \mathcal{A}$}
 	\If{$w^*_{ij} < 0$}
 		\State $\Phi$.add($(i,j)$)
 	\EndIf
 \EndFor
 
 \For{$Y_v \in Y$}
 	\State usedArcs $\gets \emptyset$
 	\If{$Y_v((i,j)) > 0$}
 		\State usedArcs.add($(i,j)$)
 	\EndIf
 	\If{usedArcs $\cap$ $\Phi$ $\neq \emptyset$}
 		\State $\Pi$.add(corresponding passenger $p$ of $Y_v$)
 	\EndIf 
 \EndFor
 \end{algorithmic}
 \label{ALG: 2}
 \end{algorithm*}

Algorithm \ref{ALG: 2} shows the pseudocode of our pricing filter. We iterate through all arcs $a \in \mathcal{A}$ of graph $\mathcal{G = (V,A)}$ (l. 3)  and add arcs with a negative dual capacity constraint to the set $\Phi$ (l. 5). Next, we iterate through all columns $\bm{Y_v}$ of matrix $\bm{Y}$, i.e., the paths that we found so far and collect all arcs that the paths use (l. 9-12). If a path uses an arc with a negative dual capacity value, we add the corresponding passenger to $\Pi$ (l. 13-15). Note that when applying our pricing filter, we have to run the pricing problems for all passengers again, as soon as the objective value of the \ref{eqn: RMP} does not improve anymore, to ensure optimality.

\subsubsection{Pricing Problem}
Recall that we defined our MCNF problem such that each request has a flow value of one and each arc has a capacity of at least one. Accordingly, the capacity constraint (4b) is always fulfilled and we can solve the pricing problem \ref{eqn: LP 4} as a shortest path problem, in which we aim to find the shortest path from the origin to the destination node for every passenger $p \in \mathcal{P}$, with a new arc cost $c'_{ij} = c_{ij} - w^*_{ij}$ for all arcs $(i,j) \in \mathcal{A}$. In this setting, a negative objective value of the pricing problem \ref{eqn: LP 4} corresponds to the shortest path from $(o_p, \delta_p)$ to $(d_p,\delta_p + t_{\text{max}})$ in graph $\mathcal{G = (V,A)}$ with arc costs $\bm{c'}$ being shorter than $a^*_p$. Generally, constraint (3b) and the constraints (3d) ensure that this approach can also be used when not all flow values are fixed to one.

A straightforward approach to solve the pricing problem (\ref{eqn: LP 4}) as a shortest path problem is to use the Dijkstra algorithm. However, applying a straightforward Dijkstra finds its limits for large scale instances due to the size of $\mathcal{G}$. Here, we note that we cannot increase the Dijkstra's algorithm's efficiency by topological ordering as $\mathcal{G}$ contains cycles induced by the transit arcs. Accordingly, we utilize the $A^*$ algorithm to increase the efficiency of the shortest path calculation.

To apply the $A^*$ algorithm, we need to define an admissible distance approximation $h$, which the $A^*$ algorithm uses to guide its search. This distance approximation $h$ does not overestimate the cost of the shortest path from all nodes $i \in \mathcal{G}$ to all destination vertices $j \in V_D$ of all passengers in graph $\mathcal{G = (V,A)}$ with cost function $c'_{ij} = c_{ij} - w^*_{ij}$ for all arcs $(i,j) \in \mathcal{A}$. The more accurate the distance approximation is, the fewer nodes $A^*$ has to expand to find the shortest path.\medskip

\noindent In the following, we reduce our time-expanded multilayered digraph to a significantly smaller static digraph which allows us to introduce an admissible distance approximation. Let $\mathcal{G} = (\mathcal{V, A})$ be the graph defined in Section \ref{subsec: graph formulation idea} and let $\mathcal{H} = (\mathcal{V}_H, \mathcal{A}_H)$ be a new graph. Let $\mathcal{V}_H = \mathcal{S} \cup \mathcal{V}_D$ be the union of the set of all stops $\mathcal{S}$ of the transportation system and all destination vertices $\mathcal{V}_D$. Furthermore, for $i,j \in \mathcal{V}_H$ let $A_{ij}$ be the set of all arcs $(a,b) \in \mathcal{A}$ of one of the following types
\begin{itemize}
	\item[(i)] $a,b \in \mathcal{V}_{\mathcal{R}}$, with $a = (i,t_a,r_a)$, $b = (j, t_b, s_b)$
	\item[(ii)] $a \in \mathcal{V}_{\mathcal{R}}$, $b \in \mathcal{V}_{\mathcal{S}}$, with $a = (i,t_a,r_a)$, $b = (j, t_b)$
	\item[(iii)] $a \in \mathcal{V}_{\mathcal{S}}$, $b \in \mathcal{V}_{\mathcal{R}}$, with $a = (i,t_a)$, $b = (j, t_b, s_b)$
	\item[(iv)] $a \in \mathcal{V}_{\mathcal{S}}$, $b \in \mathcal{V}_{D}$, with $a = (i,t_a)$, $b = (j, t_b)$
\end{itemize}
If $|A_{ij}| > 0$, we add arc $(i,j)$ to $\mathcal{A}_H$. The cost of arc $(i,j)$ is  defined as $c_{ij}^\mathcal{H} = \min\{c_{ab}|(a,b) \in \mathcal{A}_{ij}\}$. 

The following first two definitions, introduce the stop of a temporal route or waiting vertex and the pricing cost of the arcs of our time-expanded digraph. This allows us to propose a distance approximation in the third definition.

\begin{definition}
Let $\mathcal{G = (V, A)}$ be the graph defined in Section \ref{subsec: graph formulation idea} and let $a \in \mathcal{V_R}\cup \mathcal{V_S}$ be a vertex. We define the stop of $a$ as 

\begin{equation}
\eta(a)=
  \begin{cases}
    s_a, & \text{if } a = (s_a, t_a, r_a)\in \mathcal{V}_\mathcal{R}\\
	s_a, & \text{if } a = (s_a, t_a)\in \mathcal{V}_\mathcal{S}
  \end{cases}
\end{equation}
\end{definition}

\begin{definition}
Let $\mathcal{G = (V, A)}$ be the graph defined in Section \ref{subsec: graph formulation idea} and let $i,j \in \mathcal{V}$ be two vertices. We define the pricing cost $c'$ of graph $\mathcal{G}$ as $c'_{ij} = c_{ij} - w^*_{ij}$, $(i,j) \in \mathcal{A}$ with $w^*_{ij}$ as defined in \ref{eqn: LP 4}.
\end{definition}

\begin{definition}
Let $h_p(i)$ be the cost of the shortest path from $i$ to $d_p$ in $\mathcal{H}$ with $i\in \mathcal{V}_H$ and $d_p$ the destination vertex of passenger $p\in\mathcal{P}$. We then define $h'_p(a)$ as follows

\begin{equation}
h'_p(a)=
  \begin{cases}
    h_p(\eta(a)), & \text{if } a\in \mathcal{V}_\mathcal{R}\cup \mathcal{V}_\mathcal{S}\\
	h_p(a), & \text{if } a\in \mathcal{V}_D
  \end{cases}
\end{equation}

\end{definition}

\begin{theorem}
$h'_p(a)$ is an admissible distance approximation for the cost of the shortest path from $a\in \mathcal{V} \backslash \mathcal{V}_O$ to the destination vertex $d_p$ of passenger $p\in \mathcal{P}$ in graph $\mathcal{G}$ with pricing cost $c'$.
\end{theorem}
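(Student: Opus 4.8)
The plan is to read ``admissible'' in its standard $A^*$ sense: I must show $\mathrm{opt}_{c'}(a,d_p) \ge h'_p(a)$, where $\mathrm{opt}_{c'}(a,d_p)$ denotes the cost of a cheapest directed path from $a$ to $d_p$ in $\mathcal{G}$ under the pricing cost $c'$. Since $h'_p(a)=h_p(\eta(a))$ (resp.\ $h_p(a)$ when $a\in\mathcal{V}_D$) is itself a shortest-path value in $\mathcal{H}$, it suffices to produce, for an arbitrary $a$-to-$d_p$ path $P$ in $\mathcal{G}$, a walk in $\mathcal{H}$ from $\eta(a)$ (resp.\ $a$) to $d_p$ whose $c^{\mathcal{H}}$-length is at most $c'(P)$; minimizing over $P$ then yields the inequality. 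The two edge cases $a=d_p$ (both sides equal $0$) and $a\in\mathcal{V}_D\setminus\{d_p\}$ (no outgoing path exists, so $\mathrm{opt}=+\infty$) are immediate, so the real content is the case $a\in\mathcal{V}_\mathcal{R}\cup\mathcal{V}_\mathcal{S}$.

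The quantitative engine I would set up first is a sign lemma on the duals. Because the capacity rows (3b) are $\le$-inequalities in a minimization LP, their optimal multipliers satisfy $w^*_{ij}\le 0$, whence $c'_{ij}=c_{ij}-w^*_{ij}\ge c_{ij}\ge 0$ for every arc. Combined with the definition $c^{\mathcal{H}}_{ij}=\min\{c_{ab}:(a,b)\in A_{ij}\}$, this gives the single inequality I will lean on throughout: for every $\mathcal{G}$-arc $(a,b)$ contributing to some $A_{ij}$, $c^{\mathcal{H}}_{\eta(a)\,\eta(b)}\le c_{ab}\le c'_{ab}$. Thus collapsing $\mathcal{G}$ onto $\mathcal{H}$ can only cheapen arcs, exactly the direction a lower bound needs; and nonnegativity of $c'$ additionally lets me delete steps of $P$ without increasing its projected length.

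Next I would define the projection $\pi$ sending each vertex of $P$ to its stop $\eta(\cdot)$ (and $d_p$ to $d_p$) and traverse $P$ arc by arc, classifying by type. A route arc moves between two distinct consecutive stops and is precisely a type-(i) contributor, so $(\eta(a),\eta(b))\in\mathcal{A}_H$ with $c^{\mathcal{H}}\le c'$; an egress arc into $d_p$ is a type-(iv) contributor, handled the same way; transit arcs (types (ii)/(iii)) and same-stop waiting arcs keep $\eta$ fixed, so they project to self-loops that I simply drop using $c'\ge 0$. Concatenating the surviving steps yields a walk in $\mathcal{H}$ from $\eta(a)$ to $d_p$ of $c^{\mathcal{H}}$-length $\le c'(P)$, and taking $P$ optimal closes the argument. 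I would also record that access arcs cannot appear on $P$, since $a\notin\mathcal{V}_O$ and origin vertices have no incoming arcs.

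The step I expect to be the main obstacle is the treatment of walking arcs, which join waiting vertices at two \emph{distinct} stops yet are not literally among the contributor types (i)--(iv). If the projection of a walking arc $(s,s')$ is not realized by any arc of $\mathcal{H}$, the $\mathcal{H}$-walk I build breaks, and worse, $h_p$ could then \emph{overestimate} the true cost-to-go and fail admissibility. Resolving this is the crux: I would either verify that walking arcs are in fact captured by $\mathcal{H}$ (i.e.\ that they belong to some $A_{ij}$ with $i=s$, $j=s'$, giving $c^{\mathcal{H}}_{ss'}\le c_{\mathrm{walk}}\le c'_{\mathrm{walk}}$), or exhibit a dominating route/egress connection. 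Establishing that every inter-stop move of $P$ lifts to an $\mathcal{H}$-arc of no greater cost is what makes the telescoping bound, and hence the whole proof, go through.
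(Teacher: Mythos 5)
Your proposal follows the paper's proof in structure almost exactly: the same degenerate cases for $a\in\mathcal{V}_D$, the same sign observation $w^*_{ij}\le 0$ (hence $c'_{ij}\ge c_{ij}\ge 0$), and the same core step of projecting an optimal $a$-to-$d_p$ path in $\mathcal{G}$ through $\eta$ onto $\mathcal{H}$ and telescoping $c'_{x_ix_{i+1}}\ge c_{x_ix_{i+1}}\ge c^{\mathcal{H}}_{\eta(x_i)\eta(x_{i+1})}$. So there is no methodological divergence to report.

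What is worth reporting is that the ``main obstacle'' you single out is a genuine gap --- and it is a gap in the paper's own proof, not only in your attempt. Under the literal definition of $A_{ij}$, types (i)--(iv) cover arcs $\mathcal{V}_\mathcal{R}\to\mathcal{V}_\mathcal{R}$, $\mathcal{V}_\mathcal{R}\to\mathcal{V}_\mathcal{S}$, $\mathcal{V}_\mathcal{S}\to\mathcal{V}_\mathcal{R}$, and $\mathcal{V}_\mathcal{S}\to\mathcal{V}_D$, but no arc with both endpoints in $\mathcal{V}_\mathcal{S}$; hence neither waiting arcs nor walking arcs contribute to any $A_{ij}$. Waiting arcs are harmless, as you note: they project to the self-loop $(s,s)$, and $A_{ss}$ contains the cost-zero transit arcs, so dropping them (your $c'\ge 0$ argument) costs nothing. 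Walking arcs are not harmless. If an optimal path in $\mathcal{G}$ uses a walking arc between stops $s\neq s'$ that no route serves consecutively, then $A_{ss'}=\emptyset$, the arc $(s,s')$ does not exist in $\mathcal{H}$, the projected sequence is not a walk in $\mathcal{H}$, and $h_p(s)$ can strictly overestimate the cost-to-go --- in the extreme, $h_p(s)=+\infty$ while a finite walk-then-egress path exists in $\mathcal{G}$ --- so admissibility genuinely fails. This is precisely the step the paper asserts without justification (``by construction of $\mathcal{H}$ there (i) exists a path $[\eta(x_1),\dots,\eta(x_n)]$ in $\mathcal{H}$''). The repair is your first suggested option: the definition of $A_{ij}$ must be read or amended to include a fifth type $a,b\in\mathcal{V}_\mathcal{S}$ with $a=(i,t_a)$, $b=(j,t_b)$, so that walking (and waiting) arcs are contributors. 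With that amendment, every arc of the optimal path lies in $A_{\eta(x_i)\eta(x_{i+1})}$ (access arcs cannot occur, since $a\notin\mathcal{V}_O$ and origin vertices have no incoming arcs), the projected walk exists in $\mathcal{H}$, the arcwise inequality $c^{\mathcal{H}}_{\eta(x_i)\eta(x_{i+1})}\le c_{x_ix_{i+1}}\le c'_{x_ix_{i+1}}$ holds, and both your argument and the paper's close. Your fallback of ``exhibiting a dominating route/egress connection'' cannot work in general, since no such connection need exist.
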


\begin{proof}
For $h'_p(a)$ to be an admissible distance approximation for the pricing cost $c'$ of the shortest path from $a\in \mathcal{V} \backslash \mathcal{V}_O$ to the destination vertex $d_p$, $p\in \mathcal{P}$ in $\mathcal{G}$, we have to show that there cannot exist a path $\sigma = [x_1, x_2,...,x_{n-1},x_n]$ with $x_1 = a$ and $x_n = d_p$ in $\mathcal{G}$, for which it holds that 

\begin{equation}
	\sum_{i = 1}^{n-1} c'_{x_ix_{i+1}} < h'_p(a)
\end{equation}

\textit{Case 1:} 
Let $a\in \mathcal{V}_D$. If $a = d_p$ then it holds that $h'_p(a) = h_p(a) = h_p(d_p) = 0$. If $a \neq d_p$  then there exist no outgoing arcs from $a$ in $\mathcal{G = (V,A)}$ and thus no outgoing arcs from $a$ in $\mathcal{H}$. Accordingly, there exists no path from $a$ to $d_p$ in $\mathcal{G}$, which implies that the pricing cost of the shortest path is $\infty$. By construction of $\mathcal{H}$ there also does not exist a path from $a$ to $d_p$ in $\mathcal{H}$ which implies $h'_p(a) = \infty$.
\vspace{1ex}

\textit{Case 2:} Let $a \in \mathcal{V}_\mathcal{R} \cup \mathcal{V}_\mathcal{S}$. Furthermore, let $\sigma = [x_1, x_2,...,x_{n-1},x_n]$ with $x_1 = a$ and $x_n = d_p$ be the shortest path from $a$ to $d_p$ in $\mathcal{G}$ with pricing cost $c'$. By construction of $\mathcal{H}$ there (i) exists a path $\sigma = [\eta(x_1), \eta(x_2),...,\eta(x_{n-1}),\eta(x_n)]$ in $\mathcal{H}$. Because (ii) $c_{\eta(x_i)\eta(x_{i+1})}^\mathcal{H} = \min\{c_{e,f}|(e,f) \in \mathcal{A}_{\eta(x_i)\eta(x_{i+1}}\}$ for all $i \in [1,...,n]$ and (iii) $w^*_{ef} \leq 0$ for all $(e,f) \in \mathcal{A}$, it holds that $c'_{x_ix_{i+1}} \overset{(iii)}{\geq} c_{x_ix_{i+1}} \overset{(ii)}{\geq} c^\mathcal{H}_{\eta(x_i)\eta(x_{i+1})}$ for all $i \in [1,2,...,n]$. Thus, 
\begin{equation}
\sum_{i = 1}^{n-1} c'_{x_ix_{i+1}} \overset{(iii)}{\geq} \sum_{i = 1}^{n-1} c_{x_ix_{i+1}} \overset{(ii)}{\geq} \sum_{i = 1}^{n-1} c^\mathcal{H}_{\eta(x_i)\eta(x_{i+1})} \overset{(i)}{\geq} h'_p(a)
\end{equation}
$$\eqno \square$$

\end{proof}

\begin{remark}
Vertices $a \in \mathcal{V}_O$ have no ingoing arcs, which implies that the $A^*$ algorithm cannot expand such a node except as the origin of a shortest path. Accordingly, we do not need a distance approximation for $a \in \mathcal{V}_O$ to apply the $A^*$ algorithm in the pricing problem of every passenger. Thus, we do not consider this case in the distance approximation.
\end{remark}

Since distance approximation $h'_p$ is admissible, optimality of the shortest path solution holds when we apply the $A^*$ algorithm to the pricing problem of passenger $p\in \mathcal{P}$ with distance approximation $h'_p$.

A straightforward approach to calculate the distance approximation is to initiate the Dijkstra algorithm from every vertex in $\mathcal{H}$, with all destination nodes in the target set. 
We can achieve a speedup in the computation time of the distance approximation if we preprocess a major part of the distance approximation calculations. To do so, let $A_Q = \{(q,j) \vert j \in V_D, q \in \mathcal{N}^-(j)\}$, where $\mathcal{N}^-(j)$ is the ingoing neighbourhood of $j \in V_D$ in graph $\mathcal{H}$ and $\mathcal{H'} = (V_H \setminus V_D, A_H \setminus A_Q)$ is a new graph, which does not contain the destination vertices. We can then calculate the shortest path $\sigma(i,q)$ between all vertex pairs in $\mathcal{H'}$ via the Dijkstra algorithm. This calculation of the all pairs shortest path problem in $\mathcal{H'}$ is equal for all instances on the same underlying transportation network and can thus be preprocessed. When we solve a new instance, we only have to find a path of minimum length $min\{\sigma(i,q) + c_{qj}) \vert i \in \mathcal{S}, j \in V_D, q \in \mathcal{N}^-(j)\}$ in $\mathcal{H}$ from all vertices $i \in \mathcal{S}$ to all destination vertices $j \in V_D$.

\subsection{Integrality}
\label{subsec: integrality}
Up to this point our CG algorithm does not yield integer solutions for \ref{eqn: IP 2}. To analyse large transportation system's optima and thus the potential of different transportation systems, finding fractional solutions often suffices. Even though integer solutions are not the main focus of our work, we can obtain integer solutions of good quality through a CG-based P\&B approach. In this approach, we apply the CG algorithm (Algorithm~\ref{ALG: 1}) once to find an optimal continuous solution and solve the resulting program with the new columns found via CG as an integer program (IP) with a standard IP solver. Note that we can now utilize our pricing filter (Algorithm~\ref{ALG: 2}) and the $A^*$-based pricing algorithm in Algorithm~\ref{ALG: 1}. This CG-based P\&B approach does not guarantee optimal integer solutions because the feasible region of the resulting IP is a subset of the feasible region of the original master problem after Algorithm~\ref{ALG: 1} terminates. Nevertheless, our CG-based P\&B approach allows us to find integer solutions of good quality quickly. From such an integer solution we can then construct a path for every passenger $p \in \mathcal{P}$ - if one exists - (see Section~\ref{subsec: example}) in which every path fulfills all the constraints defined in Section~\ref{sct: problem setting}.

Note that generally, the main drawback of a P\&B approach is, that the resulting IP can be infeasible \citep{sadykov2019primal}. To mitigate this drawback, we ensure that our CG-based P\&B approach always finds an integer solution because the variables used for initiating the CG algorithm are still contained in the IP and give a trivial integer solution.

\section{Case Study}
\label{sec: case study}
Our case study bases on a real-world setting for the city of Munich, Germany. We derive the public transport network consisting of bus, subway, and tram lines (see Figure~\ref{fig: transportation networks}), together with its schedules from GTFS data \citep{GTFS}. The bus network consists of 986 vertices and 2,228 arcs, while the subway network consists of 89 vertices and 184 arcs, and the tram network consists of 163 vertices and 338 arcs. We received information about vehicle capacity by the public transportation provider MVG \citep{MVG}. Here, busses have a capacity of 60, subways a capacity of 940 and trams a capacity of 215.
We obtained travel demand from the modeling tool MITO (cf. \citealp{moeckel2019microscopic}), which generates passenger data through a Monte-Carlo sampling followed by a nested mode choice model. We focus on three passenger datasets. The SUBWAY dataset consists of passengers only using the subway network with 2,206 trips between 7-9am, the BUS dataset consists of passengers only using the bus network with 26,320 trips between 7-9am and the BUS-SUBWAY-TRAM dataset consists of passengers using all possible intermodal combinations of the bus, subway, and tram network with 62,550 trips between 7-9am. 

\begin{figure}[t]
	\begin{subfigure}{.3\textwidth}
		\centering
		\includegraphics[scale=0.25]{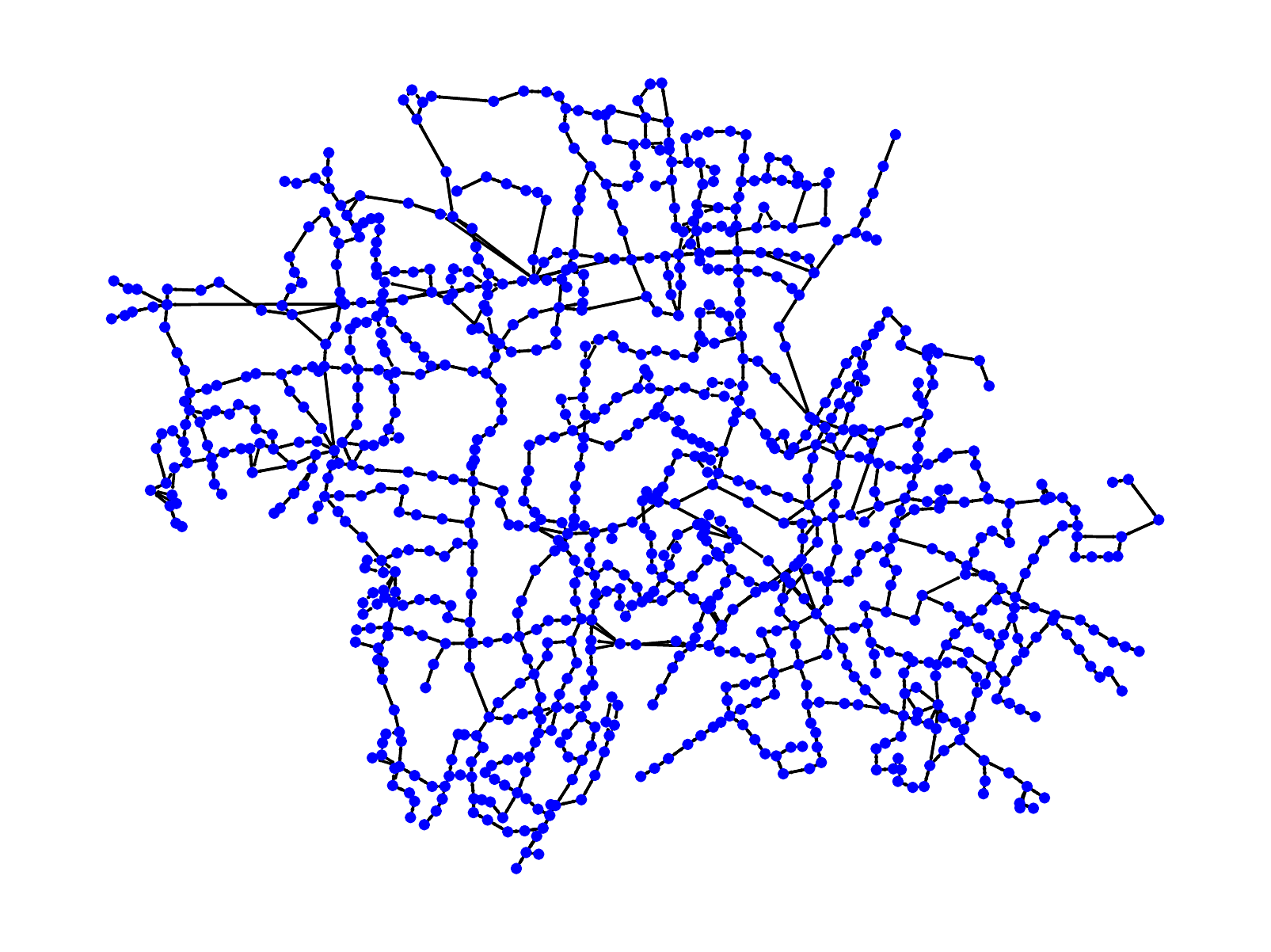}
		\caption{Bus network}
	\end{subfigure}
	\begin{subfigure}{.3\textwidth}
		\centering
		\includegraphics[scale=0.25]{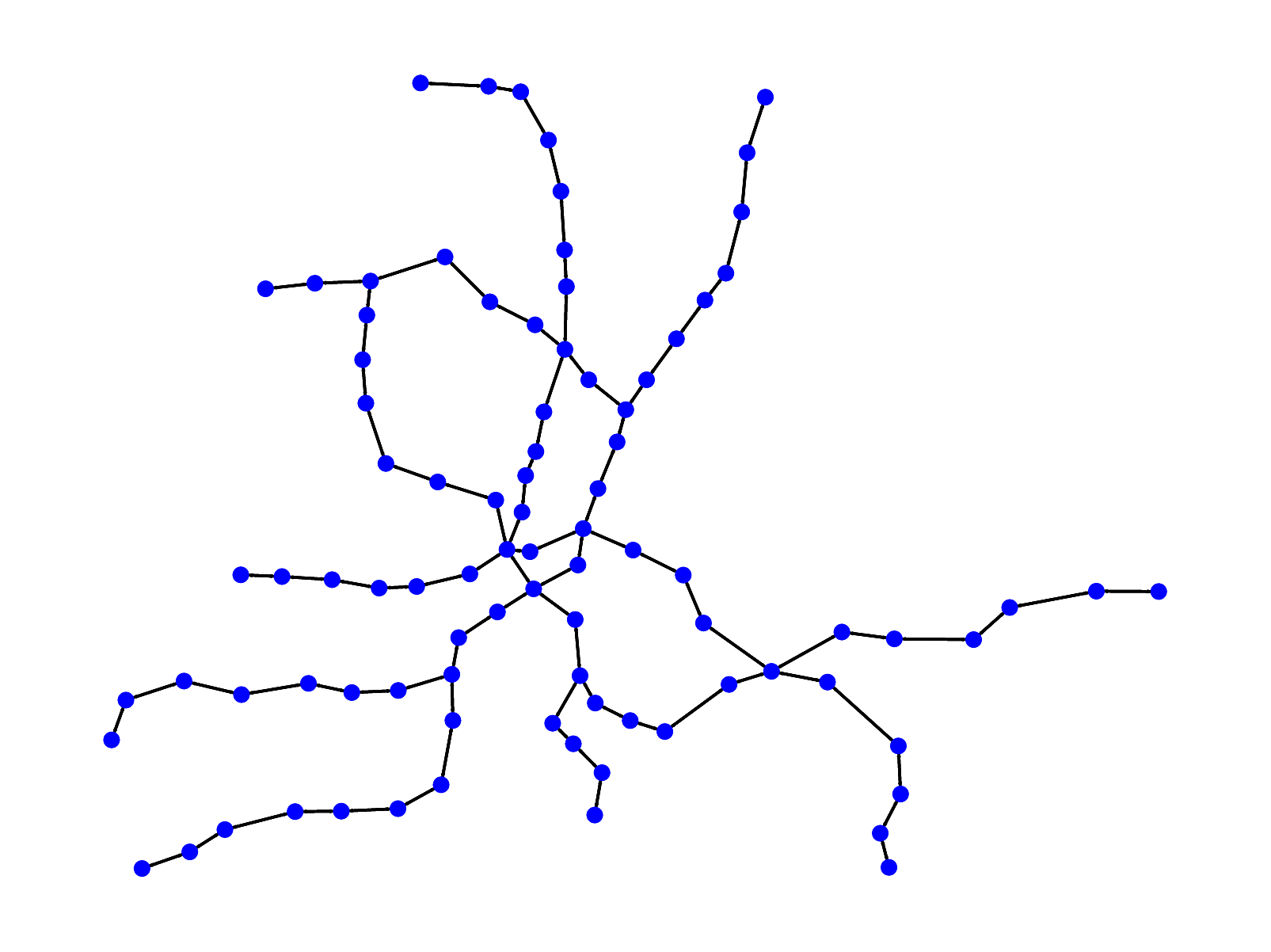}
		\caption{Subway network}
	\end{subfigure}
	\begin{subfigure}{.3\textwidth}
		\centering
		\includegraphics[scale=0.25]{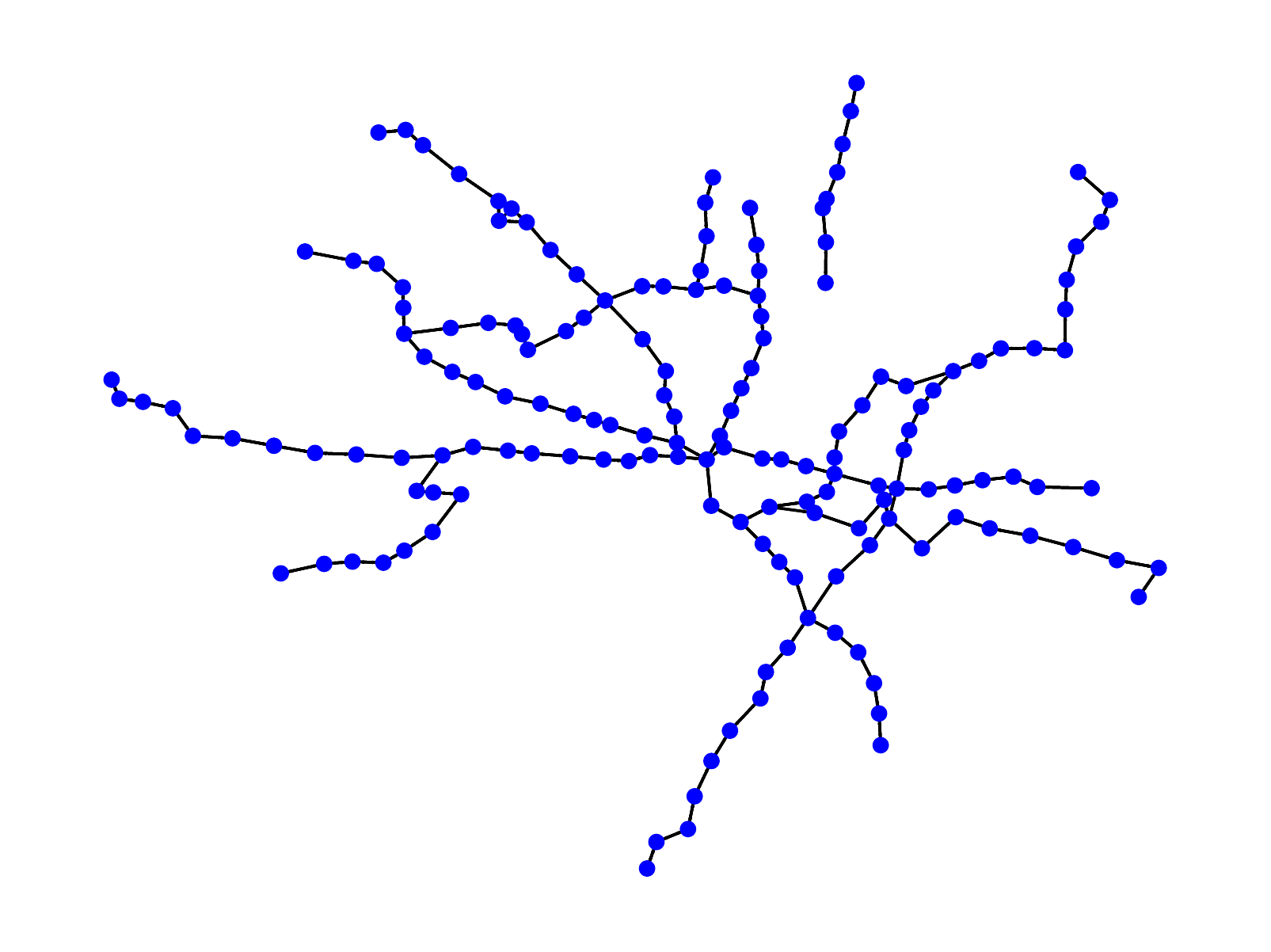}
		\caption{Tram network}
	\end{subfigure}
	\caption{Transportation networks of our Munich case study.png}
	\label{fig: transportation networks}
\end{figure}

We use this case study to analyse the effectiveness of i) our CG approach, ii) our pricing filter, iii) our $A^*$-based pricing algorithm, and iv) the quality of our integer solution, obtained by our CG-based P\&B approach. For analysis i) we generate instances with 132-662 passengers (6-30\%) from the SUBWAY dataset. For each passenger set size, we run 10 random instances, in which we scale the capacity of the vehicles according to the subset size, i.e., in an instance, with 10\% of all passengers of the dataset, only 10\% of the vehicles' capacity is available. For analysis ii) - iv), we generate instances with 2,632-23,688 passengers (10-90\%) for the BUS dataset and 6,255-56,295 (10-90\%) passengers for the BUS-SUBWAY-TRAM dataset. For each passenger set size, we again run 10 random instances, in which we scale the capacity of the vehicles according to the subset size. Note that we hold out 10\% of the passenger data during the instance generation to be able to generate at minimum 10 different instance for each instance size.

\section{Results}
\label{sec: results}
In this section we show the effectiveness of i) our CG approach, ii) our pricing filter, iii) our $A^*$-based pricing algorithm, and iv) the quality of our integer solutions based on the experiments described in Section \ref{sec: case study}. We set the maximum runtime of the algorithm to 60 minutes. All our experiments have been conducted on a standard desktop computer equipped with an \texttt{Intel(R) Core(TM) i9-9900, 3.1 GHz CPU} and \texttt{16 GB} of \texttt{RAM}, running \texttt{Ubuntu 20.04}. We have implemented the CG algorithm in \texttt{Python (3.8.11)} using \texttt{Gurobi 9.5} to solve the restricted master problem. Our source code as well as an overview of all results can be found on \url{https://github.com/tumBAIS/intermodalTransportationNetworksCG}.

\subsection{Effectiveness of the column generation}
Figure~\ref{fig: box cg-vs-ip} shows the computation time of our CG-based P\&B algorithm and \ref{eqn: IP 1} solved with a standard IP solver. Here, the CG-based P\&B algorithm does not utilize our pricing filter and solves the pricing problems with Dijkstra's algorithm. We see that our CG-based P\&B algorithm vastly outperforms a standard IP formulation even without any additional enhancements. For the IP formulation, we run out of memory for all instances with more than 662 passengers.
Table~\ref{tab: LP CG} complements this observation by showing the number of solved instances, construction time of the IP/$RMP$, the solving time, and the total time for instances with 132 - 662 passengers of the SUBWAY dataset. 
Here, the IP/$RMP$ construction time is the time needed to formulate \ref{eqn: IP 1} or the $RMP$ from Algorithm \ref{ALG: 1} respectively. The CG-based P\&B algorithm needs less than one second to find an integer solution while the IP formulation takes more than 9 minutes in the biggest subset size. The solving time in the CG-based P\&B algorithm is also much lower than in the IP formulation. With only 662 passengers, the standard IP formulation is not solvable for eight out of ten instances, because we run into memory bounds. 

\begin{figure}[!tb]
\centering
	\includegraphics[width=0.4\textwidth]{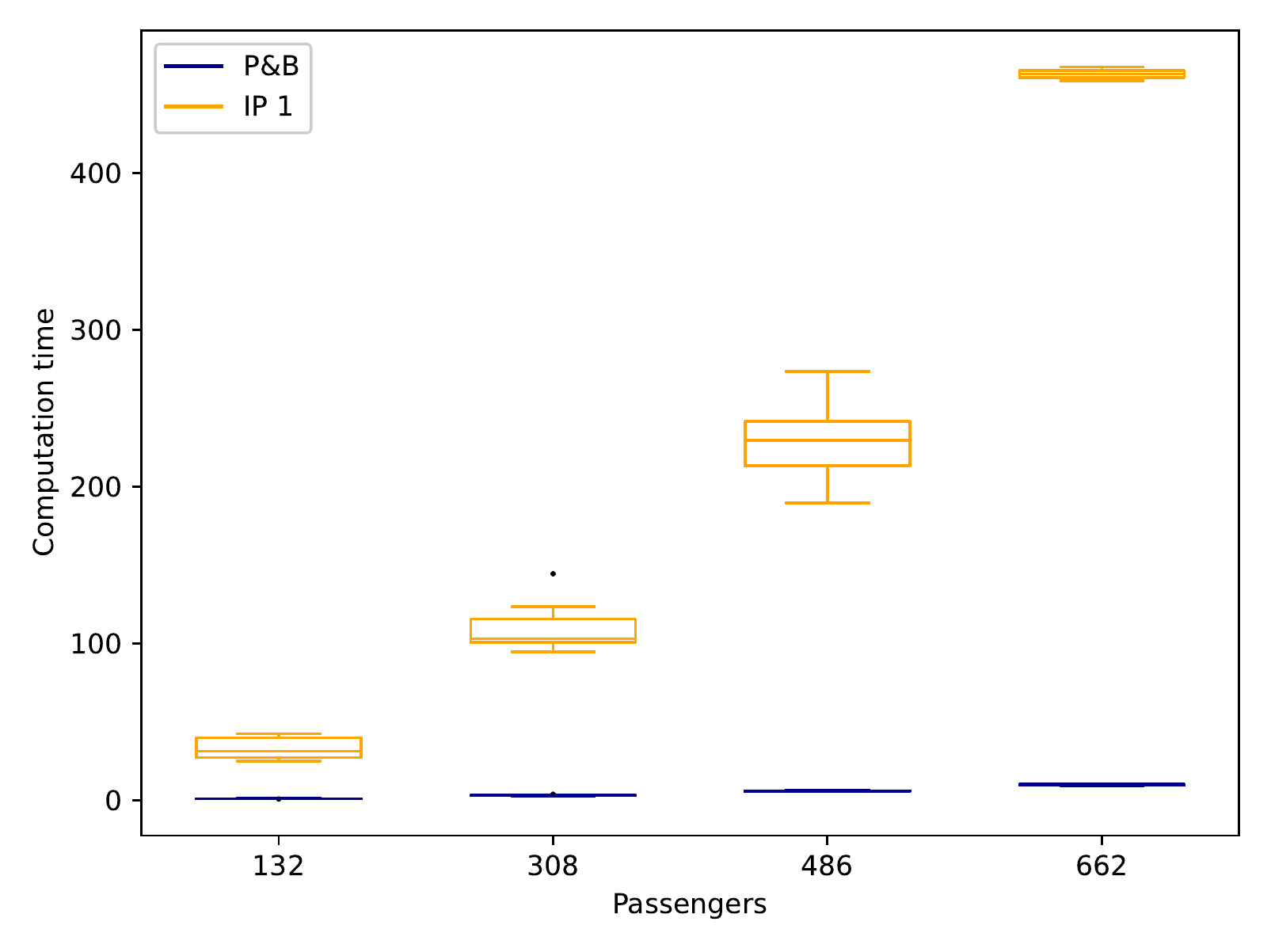}
	\caption{Total time comparison P\&B vs. IP 1 - SUBWAY}
	\label{fig: box cg-vs-ip}
\end{figure}

\setlength\tabcolsep{3.5pt}  
\renewcommand{\arraystretch}{1.5}
\begin{table}[tb]
\centering
\scriptsize
\caption{Algorithm comparison \ref{eqn: IP 1} (IP) and Algorithm \ref{ALG: 1} without $A^*$ and filter (CG) - SUBWAY}
\begin{tabular}{@{} l*{8}{r} @{}}
\toprule
\textbf{Number of passengers} & \multicolumn{2}{c}{\textbf{132}} 
& \multicolumn{2}{c}{\textbf{308}} & \multicolumn{2}{c}{\textbf{486}} & \multicolumn{2}{c}{\textbf{662}}\\
\cmidrule(lr){1-1} \cmidrule(lr){2-3} \cmidrule(lr){4-5} \cmidrule(lr){6-7} \cmidrule(lr){8-9}  
Algorithm  & CG& IP & CG& IP & CG& IP & CG & IP\\
\midrule
\# Solved instances (out of 10) & 10 & 10 & 10 & 10 & 10 & 10 & 10 & 2 \\
Median IP/RMP construction time & $<$1s & 48s & $<$1s & 142s & $<$1s & 315s & $<$1s & 553s\\ 
Mean solving time & 1s & 33s & 3s & 109s & 6s & 228s & 10s & 463s\\
Mean total time & 1s& 82s & 3s & 252s & 6s & 542s & 10s & 1016s\\
\bottomrule
\end{tabular}
\label{tab: LP CG}
\end{table}

\subsection{Effectiveness of the pricing filter}
Table~\ref{tab: Filter bus} shows the number of passengers, solved instances, computation time, and number of solved pricing problems for the subset of all travel requests ranging from 2,632-23,688 passengers of the BUS dataset, computed with our CG-based P\&B algorithm utilizing and not utilizing our pricing filter, using the $A^*$-based pricing algorithm to solve each pricing problem.

All instances are solved to optimality with and without our pricing filter. However, when we apply our pricing filter the number of solved pricing problems decreases significantly by 60-65\%, which leads to a 35-50\% reduction in computation time. 
Figure~\ref{fig: pricing problems bus} and Figure~\ref{fig: Computation time bus} show the number of solved pricing problems and the computation time for the different instance sizes respectively. We see that our pricing filter leads to stable numbers of pricing problems and computation times.\medskip

Table~\ref{tab: Filter bus subway tram} extends our analysis to the BUS-SUBWAY-TRAM dataset. Here, 6,255 trips can be solved in one minute and 56,295 trips take 48 minutes to solve when using our pricing filter. For 43,785 passengers and beyond, no instance can be solved without using the pricing filter. In the instances that can be solved without our pricing filter, a computational speedup of up to 60\% can be achieved by applying our pricing filter. The reduction of 60\% in the number of solved pricing problems is similar to the reduction in the instances of the BUS dataset. Figure~\ref{fig: pricing problems bus subway tram} and Figure~\ref{fig: Computation time bus subway tram} again show the number of solved pricing problems and the computation time for the different instance sizes respectively. We see that similar to the BUS dataset our pricing filter leads to a stable number of pricing problems and to stable computation times. Concluding, our pricing filter leads to improved stable performances across different instance sizes as well as single and intermodal graph structures.

\begin{table}[!t]
\centering
\scriptsize
\caption{Algorithm comparison with and without the filter using $A^*$ - BUS dataset}
\hspace*{-8ex}
\begin{tabular}{@{} l*{10}{r} @{}}
\toprule
\textbf{Number of passengers} & \multicolumn{2}{c}{\textbf{2,632}}
& \multicolumn{2}{c}{\textbf{7,896 }}
& \multicolumn{2}{c}{\textbf{13,160}} & \multicolumn{2}{c}{\textbf{18,424}} & \multicolumn{2}{c}{\textbf{23,688}}\\
\cmidrule(lr){1-1} \cmidrule(lr){2-3} \cmidrule(lr){4-5} \cmidrule(lr){6-7} \cmidrule(lr){8-9} \cmidrule(l){10-11} 
Pricing filter (On/Off) & On & Off& On & Off & On & Off& On & Off& On & Off\\
\midrule
\# Solved instances (out of 10)  & 10 & 10 & 10 & 10 & 10 & 10 & 10 & 10 & 10 & 10 \\
Mean computation time & 33s & 51s & 107s & 192s & 245s & 420s & 382s & 737s & 603s & 1129s\\
Mean \# pricing problems & 9,606 & 24,491 & 23,450  & 62,405 & 34,956 & 97,418 & 46,143 & 132,699 & 59,372 & 165,887\\
\bottomrule
\end{tabular}
\label{tab: Filter bus}
\end{table}

\begin{figure}[!t]
\RawFloats
\captionsetup{justification=centerlast}
\begin{minipage}[t]{0.45\textwidth}
    \includegraphics[width=\textwidth]{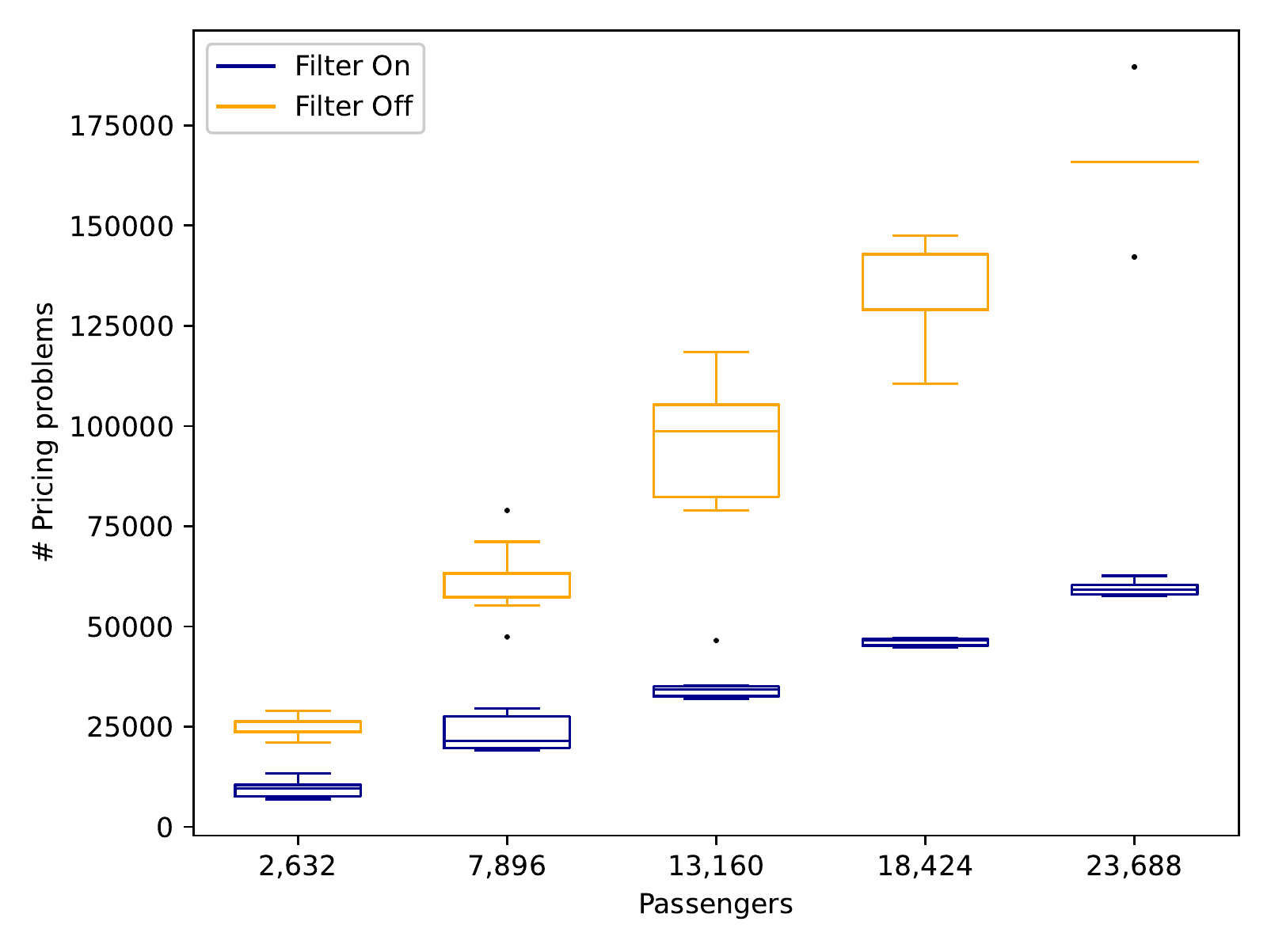}
    \caption{Number of pricing problems\\
		(BUS)}
    \label{fig: pricing problems bus}
\end{minipage}\hfill
\begin{minipage}[t]{0.45\textwidth}
    \includegraphics[width=\textwidth]{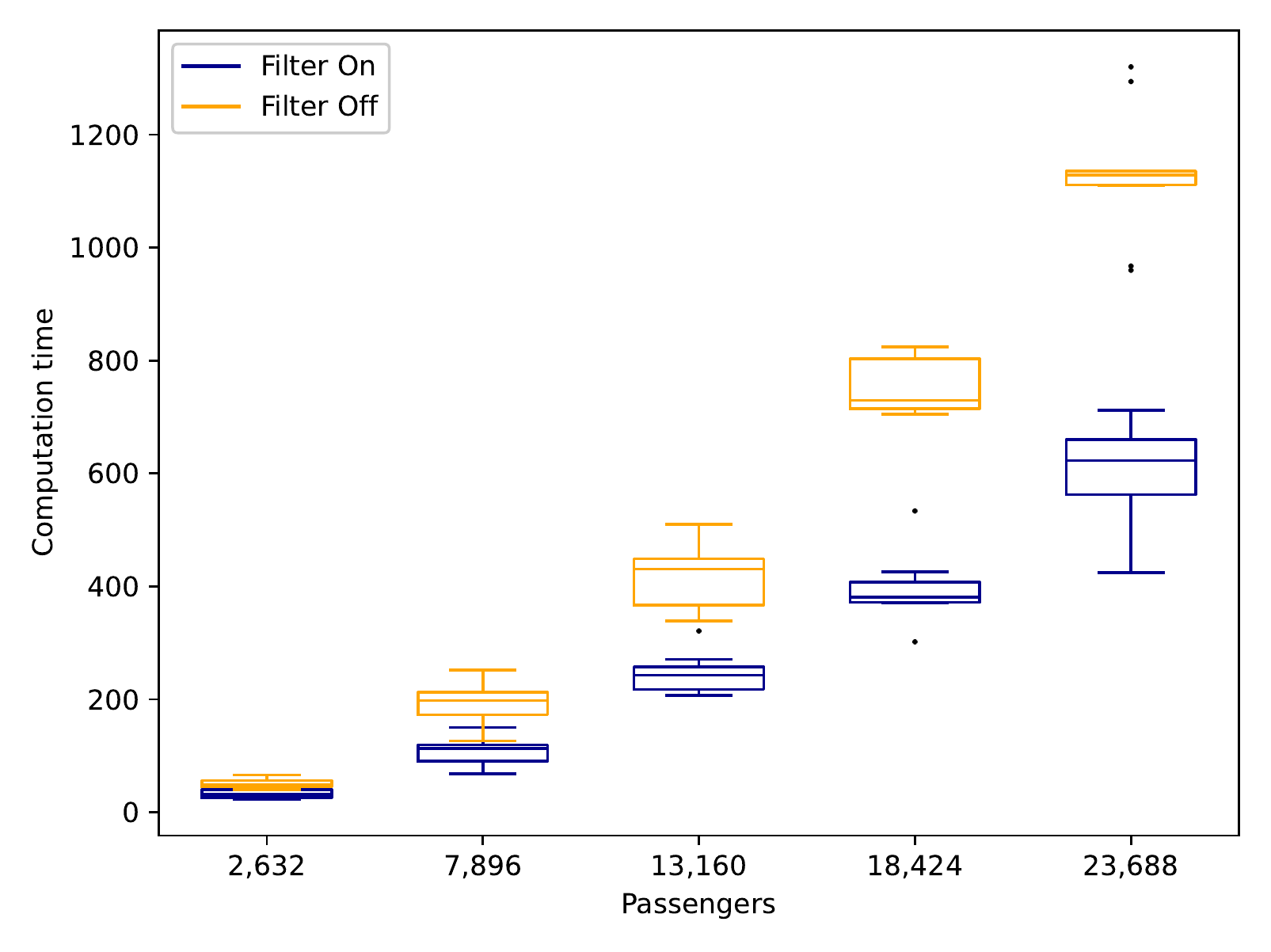}
    \caption{Computation time\\
		(BUS)}
    \label{fig: Computation time bus}
\end{minipage}
\end{figure}

\begin{table}[t]
\centering
\scriptsize
\caption{Algorithm comparison with and without the filter using $A^*$ - BUS-SUBWAY-TRAM dataset}
\hspace*{-8ex}
\begin{tabular}{@{} l*{10}{r} @{}}
\toprule
\textbf{Number of passengers} & \multicolumn{2}{c}{\textbf{6,255}}
& \multicolumn{2}{c}{\textbf{18,765}}
& \multicolumn{2}{c}{\textbf{31,275}} & \multicolumn{2}{c}{\textbf{43,785}} & \multicolumn{2}{c}{\textbf{56,295}}\\
\cmidrule(lr){1-1} \cmidrule(lr){2-3} \cmidrule(lr){4-5} \cmidrule(lr){6-7} \cmidrule(lr){8-9} \cmidrule(l){10-11} 
Pricing filter (On/Off) & On & Off& On & Off & On & Off& On & Off& On & Off\\
\midrule
\# Solved instances (out of 10)  & 10 & 10 & 10 & 9 & 10 & 8 & 10 & 0 & 10 & 0 \\
Mean computation time & 66s & 142s & 370s & 891s & 900s & 2243s & 1766s & - & 2856s & -\\
Mean \# pricing problems & 13,810 & 36,273 & 40,157  & 108,816 & 66,696 & 178,240 & 93,469 & - & 119,233 & -\\
\bottomrule
\end{tabular}
\label{tab: Filter bus subway tram}
\end{table}

\begin{figure}[t]
\RawFloats
\captionsetup{justification=centerlast}
\begin{minipage}[t]{0.45\textwidth}
    \includegraphics[width=\textwidth]{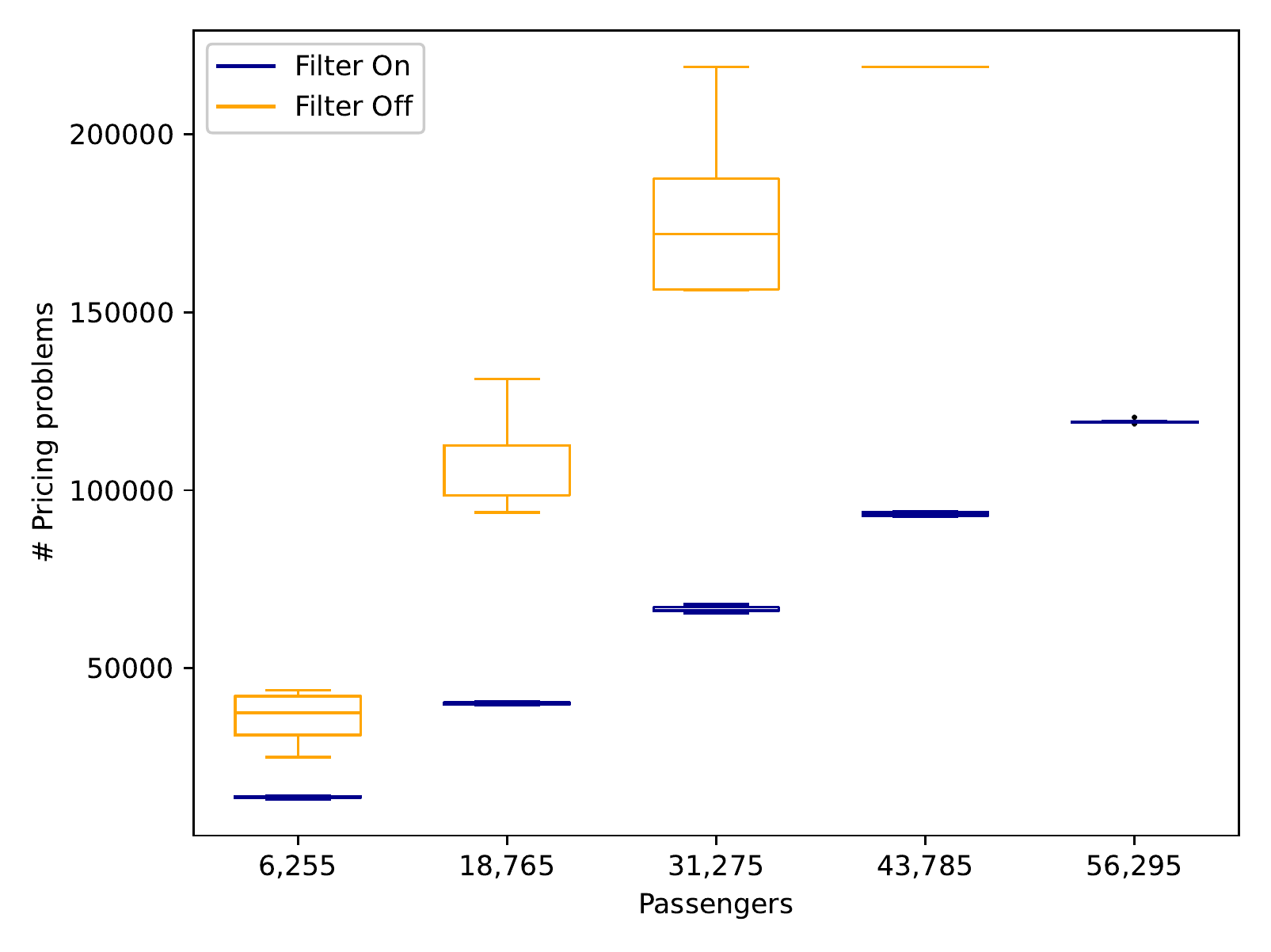}
    \caption{Number of pricing problems \\
		(BUS-SUBWAY-TRAM)}
    \label{fig: pricing problems bus subway tram}
\end{minipage}\hfill
\begin{minipage}[t]{0.45\textwidth}
    \includegraphics[width=\textwidth]{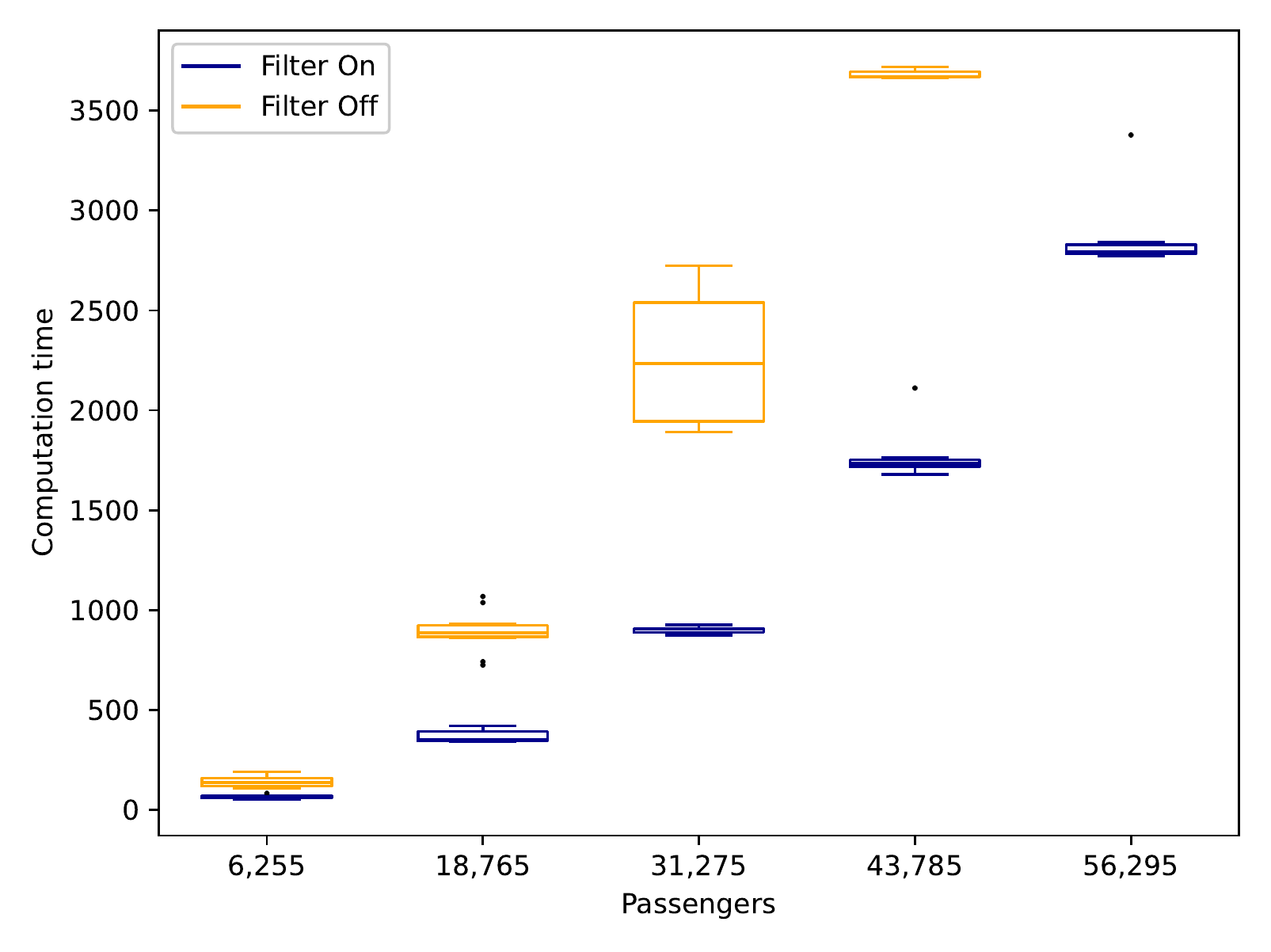}
    \caption{Computation time \\
		(BUS-SUBWAY-TRAM)}
    \label{fig: Computation time bus subway tram}
\end{minipage}
\end{figure}

\subsection{Effectiveness of the \texorpdfstring{$A^*$}-based pricing algorithm}
Table~\ref{tab: astar dijkstra bus} shows the number of passengers, solved instances, and the computation time of our CG-based P\&B algorithm with the $A^*$-based pricing algorithm and the Dijkstra-based pricing algorithm for the BUS dataset, utilizing our pricing filter in both algorithms. We see that the integration of the $A^*$-based pricing algorithm yields a computational speedup of up to 85\% in the BUS dataset. No instance with 23,688 passengers and only eight instances with 18,424 passengers can be solved within 60 minutes with the Dijkstra-based pricing algorithm, where on the other hand, all instances can be solved with the $A^*$-based pricing algorithm.

Table~\ref{tab: astar dijkstra bus subway tram} extends our analysis to the BUS-SUBWAY-TRAM dataset. For this dataset, we observe a similar speedup of the computation time as in the BUS dataset for smaller instances with 6,255 instances. Here, the $A^*$-based pricing algorithm leads to 90\% faster computation times than the Dijkstra-based pricing algorithm. In contrast to the BUS dataset, bigger instances with 18,765 passengers or more cannot be solved with the Dijkstra-based pricing algorithm, while all instances can be solved with the $A^*$-based pricing algorithm. Here, the graph constructed for the intermodal BUS-SUBWAY-TRAM transportation network is much bigger than the graph constructed for the BUS transportation network. Without the additional information provided through the distance approximation, the Dijkstra algorithm has to expand much more nodes to find the shortest path for a pricing problem. The bigger the underlying graph, the greater the difference between the number of the explored nodes between the Dijkstra-based and the $A^*$-based pricing algorithm gets. Figure~\ref{fig: astar vs dijkstra bus} and Figure~\ref{fig: astar vs dijkstra bus subway tram} complement this analysis and show that our algorithm with the $A^*$-based pricing algorithm leads to more stable computation times in both datasets compared to the Dijkstra-based pricing algorithm. 

We note that the distance approximation needed for the $A^*$-based pricing algorithm was calculated in under 17 seconds for all BUS instances and in under 45 seconds for all BUS-SUBWAY-TRAM instances. The significant improvement in computation time in the CG justifies this additional setup time.

\begin{table}[t]
\caption{Algorithm comparison $A^*$ vs. Dijkstra with the filter active - BUS}
\centering
\scriptsize
\hspace*{-8ex}\begin{tabular}{@{} l*{10}{r} @{}}
\toprule
\textbf{Number of passengers} & \multicolumn{2}{c}{\textbf{2,632}}
& \multicolumn{2}{c}{\textbf{7,896}}
& \multicolumn{2}{c}{\textbf{13,160}} & \multicolumn{2}{c}{\textbf{18,424}} & \multicolumn{2}{c}{\textbf{23,688}}\\
\cmidrule(lr){1-1} \cmidrule(lr){2-3} \cmidrule(lr){4-5} \cmidrule(lr){6-7} \cmidrule(lr){8-9} \cmidrule(l){10-11}
Pricing problem solver & $A^*$ & Dijkstra& $A^*$ & Dijkstra& $A^*$ & Dijkstra& $A^*$ & Dijkstra& $A^*$ & Dijkstra\\
\midrule
\# Solved instances (out of 10) & 10 & 10 & 10 & 10 & 10 & 10 & 10 & 8 & 10 & 0 \\
Mean computation time & 33s & 239s & 107s & 914s & 245s & 1932s & 382s & 3022s & 603s & -\\
\bottomrule
\end{tabular}
\label{tab: astar dijkstra bus}
\end{table}

\begin{table}[!t]
\caption{Algorithm comparison $A^*$ vs. Dijkstra with the filter active - BUS-SUBWAY-TRAM}
\centering
\scriptsize
\hspace*{-8ex}\begin{tabular}{@{} l*{10}{r} @{}}
\toprule
\textbf{Number of passengers} & \multicolumn{2}{c}{\textbf{6,255}}
& \multicolumn{2}{c}{\textbf{18,765}}
& \multicolumn{2}{c}{\textbf{31,275}} & \multicolumn{2}{c}{\textbf{43,785}} & \multicolumn{2}{c}{\textbf{56,295}}\\
\cmidrule(lr){1-1} \cmidrule(lr){2-3} \cmidrule(lr){4-5} \cmidrule(lr){6-7} \cmidrule(lr){8-9} \cmidrule(l){10-11}
Pricing problem solver & $A^*$ & Dijkstra& $A^*$ & Dijkstra& $A^*$ & Dijkstra& $A^*$ & Dijkstra& $A^*$ & Dijkstra\\
\midrule
\# Solved instances (out of 10) & 10 & 10 & 10 & 0 & 10 & 0 & 10 & 0 & 10 & 0 \\
Mean computation time & 66s & 793s & 370s & - & 900s & - & 1765s & - & 2856s & -\\
\bottomrule
\end{tabular}
\label{tab: astar dijkstra bus subway tram}
\end{table}

\begin{figure}[!t]
\RawFloats
\captionsetup{justification=centerlast}
\begin{minipage}[t]{0.45\textwidth}
    \includegraphics[width=\textwidth]{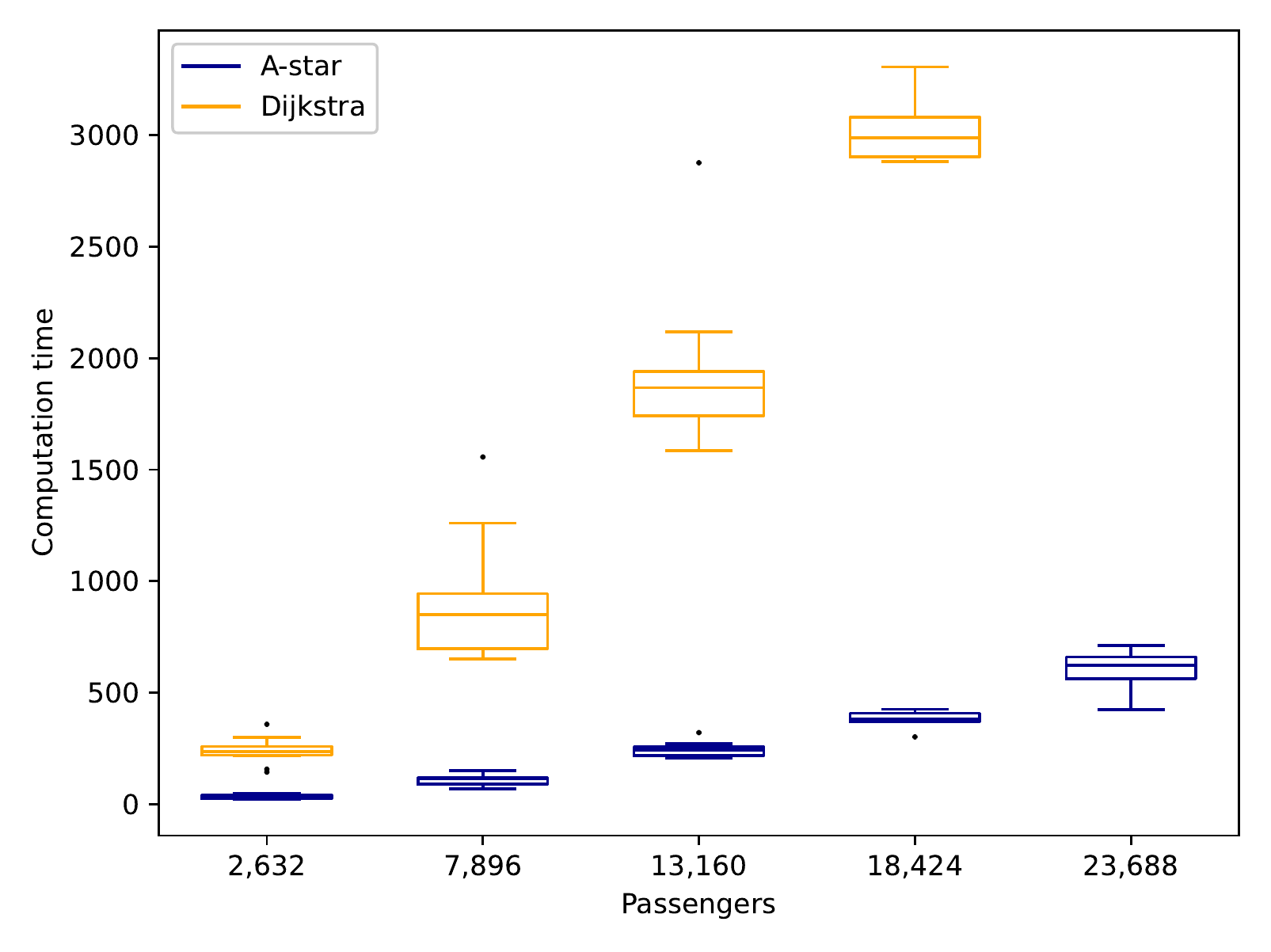}
    \caption{Computation time $A^*$ vs. Dijkstra \\
		(BUS)}
    \label{fig: astar vs dijkstra bus}
\end{minipage}\hfill
\begin{minipage}[t]{0.45\textwidth}
    \includegraphics[width=\textwidth]{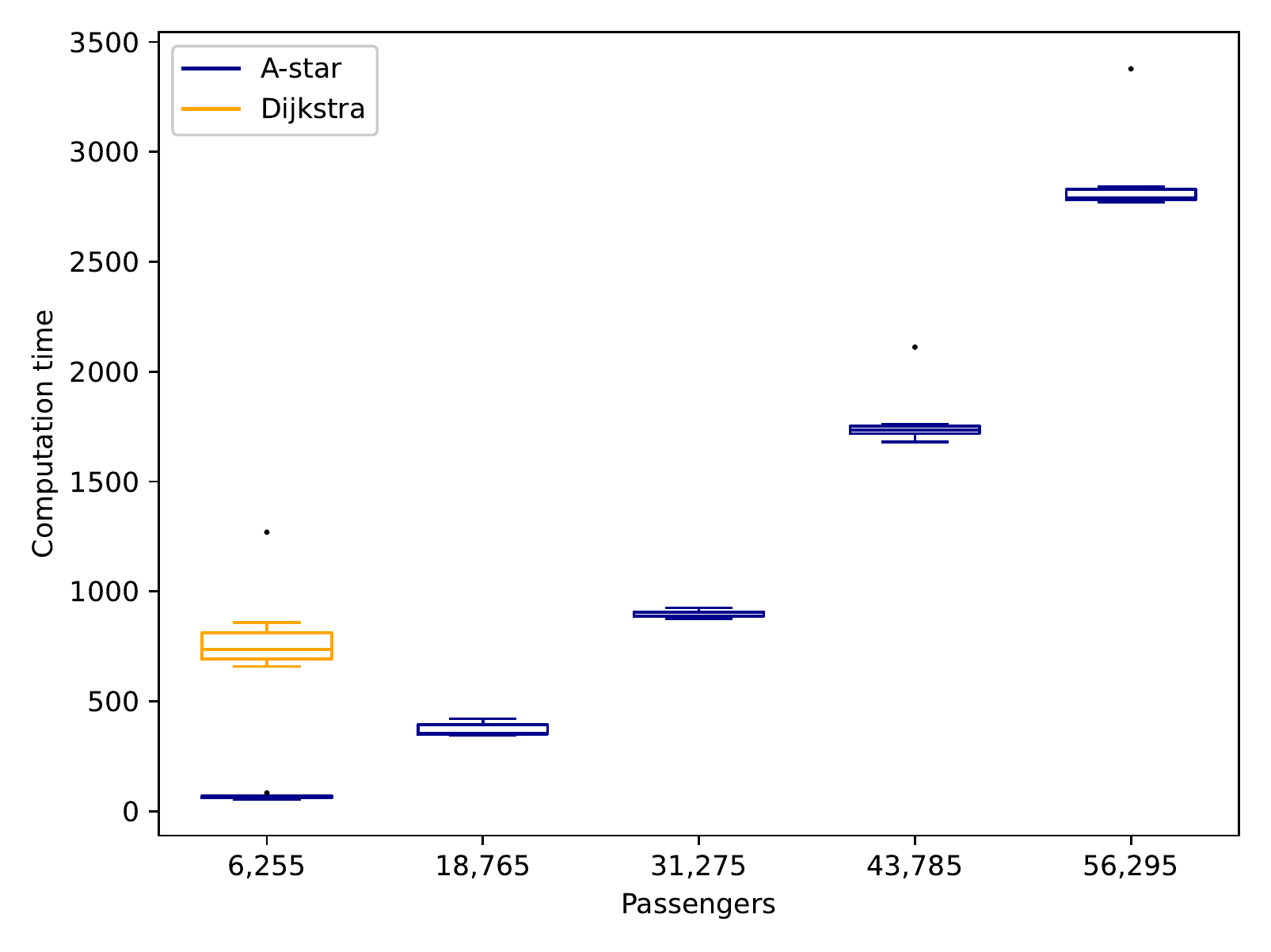}
    \caption{Computation time $A^*$ vs. Dijkstra \\
		(BUS-SUBWAY-TRAM)}
    \label{fig: astar vs dijkstra bus subway tram}
\end{minipage}
\end{figure}

\subsection{Integral solution quality}
Since finding integer solutions is not the main focus of this work, we used a standard IP solver to solve the IP introduced in our CG-based P\&B approach in Section \ref{subsec: integrality}. Table~\ref{tab: integral runtime bus} shows the number of solved instances, the total computation time, the computation time needed to find our integer solutions, and the optimality gap for instances of the BUS and BUS-SUBWAY-TRAM datasets, utilizing our CG-based P\&B algorithm including our pricing filter and the $A^*$-based pricing algorithm. Although our CG-based P\&B approach does not guarantee optimality for integer solutions, we are able to find optimal integer solutions for all instances in under five seconds. Here, we use the continuous solution of the CG algorithm as a lower bound. Furthermore, we see that finding integer solutions accounts only for a small percentage of the total computation time. Concluding, our CG-based P\&B approach leads to stable performances across different instance sizes as well as uni and intermodal transportation networks.

\begin{table}[tb]
\caption{Integral solutions}
\centering
\scriptsize
\hspace*{-8ex}\begin{tabular}{@{} l*{10}{r} @{}}
\toprule
& \multicolumn{5}{c}{\textbf{BUS}} & \multicolumn{5}{c}{\textbf{BUS-SUBWAY-TRAM}}\\ 
\cmidrule(lr){2-6} \cmidrule(lr){7-11}
\textbf{Number of passengers} & \textbf{2,632} & \textbf{7,896} & \textbf{13,160} & \textbf{18,424} & \textbf{23,688} & \textbf{6,255} & \textbf{18,765} & \textbf{31,275} & \textbf{43,785} & \textbf{56,295}\\
\midrule
\# Solved instances (out of 10) & 10 & 10 & 10 & 10 & 10 & 10 & 10 & 10 & 10 & 10 \\
Mean total computation time & 33s & 124s & 245s & 392s & 639s & 66s & 370s & 900s & 1766s & 2856s\\
Mean integer time & 1.3s & 1.7s & 2.0s & 2.1s & 2.7s & 1.8s & 4.0s & 5.0s & 3.9s & 4.6s\\
Mean optimality gap & 0.0\% & 0.0\% & 0.0\% & 0.0\% & 0.0\% & 0.0\% & 0.0\% & 0.0\% & 0.0\% & 0.0\%\\
\bottomrule
\end{tabular}
\label{tab: integral runtime bus}
\end{table}
\newpage

\section{Conclusion}
\label{sec: conclusion}

With this work, we introduced an algorithmic framework to determine the system optimum of an intermodal transportation system. To do so, we modeled a transportation system as a partially time-expanded digraph, which implicitly encodes problem specific constraints related to the intermodality of the problem. This allowed us to find the intermodal transportation system's optimum by solving a standard integer minimum-cost multi-commodity flow problem. We solved this problem with a column generation based price-and-branch approach, for which we introduced a pricing filter to reduce the number of solved pricing problems. Furthermore, we proposed a distance approximation to utilize the $A^*$ algorithm to solve the pricing problems more efficiently. We applied our methodology to a real-world case study for the city of Munich. Our results show that our pricing filter reduces the computation time of our algorithm by up to 60\% and the application of the $A^*$-based pricing algorithm additionally reduces it by up to 90\%. Our algorithm is able to solve unimodal instances with 23,688 trips in 11 minutes and intermodal instances with 56,295 trips in 48 minutes, whereas straightforward MIP models fail to solve instances with more than a few hundred trips.


%
\singlespacing{
\bibliographystyle{model5-names}
\bibliography{./ms}} 
\newpage
\onehalfspacing
\begin{appendices}
	\normalsize
\end{appendices}
\end{document}